\newtheorem{teor}{Theorem}
\newtheorem{prop}{Proposition}
\newtheorem{lem}{Lemma}
\title{About the congruence $\sum_{k=1}^n k^{f(n)} \equiv 0 \textrm{ (mod n)} $}
\author{Jos\'{e} Mar\'{i}a Grau}
\address{Departamento de Matemáticas, Universidad de Oviedo\\ Avda. Calvo Sotelo, s/n, 33007 Oviedo, Spain}
\email{grau@uniovi.es}
\author{Antonio M. Oller-Marc\'{e}n}
\address{Centro Universitario de la Defensa\\ Ctra. Huesca s/n, 50090 Zaragoza, Spain} \email{oller@unizar.es}
\begin{document}
\maketitle

\begin{abstract}
In this paper we characterize, in terms of the prime divisors of $n$, the pairs $(k,n)$ for which $n$ divides $\sum_{j=1}^n j^{k}$. As an application, we study the sets $\mathcal{M}_f :=\{n: n \textrm{ divides } \sum_{j=1}^n j^{f(n)} \}$ for some choices of $f$.
\end{abstract}

\section{Introduction}

In the literature on power sums
$$S_k(n):=\displaystyle{\sum_{j=1}^{n} j^k}$$
the following congruence is well known

\begin{prop} Let $p$ be a prime and let $k>0$ be an integer. Then, we have:
$$
S_k(p)\equiv\begin{cases}
-1 & {\rm{if}}\ p-1\mid k\\
0 & {\rm{if}}\ p-1\nmid k\\
\end{cases}\ (mod\ p).
$$
\end{prop}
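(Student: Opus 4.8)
The plan is to discard the term $j=p$ (which vanishes mod $p$) and recognize the remaining sum as running over the nonzero residue classes, then to exploit the cyclic structure of $(\mathbb{Z}/p\mathbb{Z})^{\times}$. Concretely, since $p^k\equiv 0\pmod p$ we have $S_k(p)\equiv\sum_{j=1}^{p-1}j^k\pmod p$, and $\{1,2,\dots,p-1\}$ is a complete set of nonzero residues modulo $p$.

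Fix a primitive root $g$ modulo $p$. Reindexing $j\equiv g^i$ turns the sum into a geometric progression,
$$S_k(p)\equiv\sum_{i=0}^{p-2}(g^k)^i\pmod p.$$
Now I would split into cases according to whether $p-1\mid k$. If $p-1\mid k$, then $g^k\equiv 1\pmod p$ (equivalently, $j^k\equiv 1$ for every $j\not\equiv 0$, by Fermat's little theorem), so each of the $p-1$ terms equals $1$ and $S_k(p)\equiv p-1\equiv -1\pmod p$. If $p-1\nmid k$, then $g^k\not\equiv 1\pmod p$ because $g$ has order $p-1$; hence $g^k-1$ is invertible mod $p$ and we may sum the series,
$$\sum_{i=0}^{p-2}(g^k)^i=\frac{(g^k)^{p-1}-1}{g^k-1}=\frac{(g^{p-1})^k-1}{g^k-1}\equiv\frac{1-1}{g^k-1}\equiv 0\pmod p.$$

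The only genuine input here is the existence of a primitive root modulo $p$; that is the step I would flag as the ``obstacle'', in the sense that everything else is bookkeeping. If one prefers to avoid it, an alternative uses the substitution $j\mapsto aj$: for any $a\not\equiv 0\pmod p$ this permutes the nonzero residues, so $S_k(p)\equiv a^kS_k(p)\pmod p$ and therefore $(a^k-1)S_k(p)\equiv 0\pmod p$. When $p-1\nmid k$ one can pick $a$ with $a^k\not\equiv 1$ — writing $k=q(p-1)+r$ with $0<r<p-1$, the congruence $a^k\equiv 1$ is equivalent to $a^r\equiv 1$, and $x^r-1$ has at most $r<p-1$ roots, so some nonzero $a$ fails it — whence $S_k(p)\equiv 0\pmod p$; when $p-1\mid k$, Fermat gives $j^k\equiv 1$ for each of the $p-1$ nonzero summands, so $S_k(p)\equiv -1\pmod p$. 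I would write up the primitive-root version as the main proof and mention this variant in a remark.
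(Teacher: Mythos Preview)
Your proof is correct; in fact the paper does not supply its own argument for this proposition but simply cites Hardy--Wright for ``the standard proof using primitive roots'' and MacMillan--Sondow for ``a recent elementary proof''. Your main write-up is precisely the primitive-root argument, and your proposed remark (the substitution $j\mapsto aj$ together with a root-counting bound) is essentially the elementary alternative, so you have reproduced both routes the paper points to.
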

\begin{proof} See (\cite{HARDY}) for the standard proof using primitive
roots, or (\cite{MAC}) for a recent elementary proof.
\end{proof}

The following proposition gives a more general result for $S_{k}(n)$.

\begin{prop} (Carlitz-von Staudt, 1961, \cite{CARLITZ}). Let $k>1$ and $n$ be positive integers with $n$ even, then
$$
S_k(n)\equiv - \sum_{\substack{p \mid n\\ p-1 \mid k}}\frac{n}{p}\ (mod\ n).
$$
\end{prop}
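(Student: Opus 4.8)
The plan is to reduce the whole statement to a power sum modulo a single prime power via the Chinese Remainder Theorem. Write $n=\prod_{p}p^{a_{p}}$, fix a prime $p\mid n$, and set $q=p^{a_{p}}$. Since $q\mid n$, each residue class modulo $q$ is represented exactly $n/q$ times among $1,2,\dots,n$, and $j^{k}\equiv r^{k}\pmod{q}$ whenever $j\equiv r\pmod{q}$; grouping the summands of $S_{k}(n)$ by residue class modulo $q$ therefore gives
\[
S_{k}(n)\equiv\frac{n}{q}\,\Sigma(q,k)\pmod{q},\qquad \Sigma(q,k):=\sum_{r=0}^{q-1}r^{k}.
\]
Consequently it suffices to prove that $\Sigma(p^{a},k)\equiv-p^{a-1}\pmod{p^{a}}$ when $p-1\mid k$ and $\Sigma(p^{a},k)\equiv0\pmod{p^{a}}$ otherwise: reassembling these congruences by the Chinese Remainder Theorem --- using that $n/p\equiv0\pmod{p'^{\,a_{p'}}}$ for every prime $p'\neq p$ --- then reproduces exactly the stated formula.

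To evaluate $\Sigma(p^{a},k)$ I would argue as follows. If $p-1\nmid k$, take a primitive root $g$ modulo $p^{a}$; multiplication by $g$ permutes the residues modulo $p^{a}$, so $g^{k}\,\Sigma(p^{a},k)\equiv\Sigma(p^{a},k)\pmod{p^{a}}$, and since $g^{k}\not\equiv1\pmod{p}$ the factor $g^{k}-1$ is a unit modulo $p^{a}$, whence $\Sigma(p^{a},k)\equiv0$. If $p-1\mid k$ and $p$ is odd, I would prove $\Sigma(p^{a},k)\equiv-p^{a-1}\pmod{p^{a}}$ by induction on $a$ (with the exponent allowed to vary over all integers $\ge2$ divisible by $p-1$, as needed below), the base case $a=1$ being Proposition 1, which gives $\Sigma(p,k)\equiv-1\pmod{p}$. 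For the step, write each $r\in\{0,\dots,p^{a}-1\}$ as $r=b+sp$ with $0\le b<p$ and $0\le s<p^{a-1}$. The contribution of $b=0$ is $p^{k}\,\Sigma(p^{a-1},k)$, which by the inductive hypothesis is $\equiv-p^{k+a-2}\pmod{p^{a}}$ and hence $\equiv0\pmod{p^{a}}$ because $k\ge2$. For $b\ge1$, expanding $(b+sp)^{k}=\sum_{j=0}^{k}\binom{k}{j}b^{k-j}p^{j}s^{j}$ and summing over $s$ yields $b^{k}p^{a-1}$ from the $j=0$ term, while for each $j\ge1$ the term carries a factor $p^{j}\sum_{s=0}^{p^{a-1}-1}s^{j}$ that is $\equiv0\pmod{p^{a}}$: by the involution $s\mapsto p^{a-1}-s$ if $j$ is odd (here $p$ odd is used), by the primitive-root vanishing if $j$ is even with $p-1\nmid j$, and by the inductive hypothesis if $j$ is even with $p-1\mid j$ (so $j\ge2$). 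Summing over $b$ and using $\sum_{b=1}^{p-1}b^{k}\equiv-1\pmod{p}$ gives $\Sigma(p^{a},k)\equiv p^{a-1}\sum_{b=1}^{p-1}b^{k}\equiv-p^{a-1}\pmod{p^{a}}$, completing the induction.

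The argument above relies on two features special to odd primes: $(\mathbb{Z}/p^{a}\mathbb{Z})^{\times}$ is cyclic (for the primitive-root steps) and $p^{a-1}$ is odd (for the involution). Both fail at $p=2$, and since $2\mid n$ the prime $2$ must be treated directly; I expect this to be the main obstacle. Concretely, one has to compute $\Sigma(2^{a},k)\bmod 2^{a}$ by hand, and here the parity of $k$ genuinely enters --- it is precisely the $2$-adic part of the sum that distinguishes the behaviour for even $n$ from the odd case --- so this is where the hypothesis on $n$ is used and where the reasoning needs the most care. Once the contribution of the $2$-part has been pinned down, the Chinese Remainder reassembly described in the first paragraph finishes the proof.
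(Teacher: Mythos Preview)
The paper does not supply its own proof of this proposition; it is quoted from Carlitz without argument. That said, the odd-prime portion of your plan---the primitive-root vanishing when $p-1\nmid k$ and the evaluation $\Sigma(p^{a},k)\equiv-p^{a-1}\pmod{p^{a}}$ when $p-1\mid k$---is precisely what the paper later establishes as Lemma~1 and inside the proof of Proposition~3, so your strategy matches the techniques the authors themselves use.

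There is, however, a genuine gap: you explicitly leave the prime $p=2$ unfinished, and this is not mere bookkeeping. In fact the statement as printed fails for odd $k>1$ whenever $4\mid n$: for instance $S_{3}(4)=100\equiv0\pmod{4}$, while the right-hand side is $-4/2\equiv2\pmod{4}$. (The paper's own Theorem~2 confirms $S_{k}(n)\equiv0\pmod{n}$ whenever $4\mid n$ and $k>1$ is odd; Carlitz's result is customarily stated for even $k$.) Your instinct that the $2$-adic part is where the difficulty lies was therefore exactly right: carrying out the computation shows, via the pairing $j\mapsto2^{a}-j$, that $\Sigma(2^{a},k)\equiv0\pmod{2^{a}}$ for odd $k\ge3$ and $a\ge2$, not $-2^{a-1}$, so the CRT reassembly cannot reproduce the stated formula in that range. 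Your sketch can be completed only under the extra hypothesis that $k$ is even, in which case one does get $\Sigma(2^{a},k)\equiv2^{a-1}\equiv-2^{a-1}\pmod{2^{a}}$ (cf.\ the paper's Proposition~6) and everything assembles as you describe.
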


These results motivate an interest in studying $S_{k}(n)$ (mod $n$) and, more generally, in studying $S_{f(n)}(n)$ (mod $n$) for different arithmetic functions $f$. Thus, if $p-1 \mid f(p)$, for every prime $p$, we have that the congruence $S_{f(n)}(n) \equiv -1$ (mod $n$) holds for every $n=p$ prime and it is interesting to find the composite numbers which also satisfy it. In this direction we have the \emph{Giuga numbers} (see \cite{BOR}), which are composite numbers such that $S_{\phi(n)}(n) \equiv -1$ (mod $n$), the \emph{strong Giuga numbers}, which are composite numbers such that $S_{n-1}(n) \equiv -1$ (mod $n$) (Giuga's conjecture \cite{GIUGA} states that there are no strong Giuga numbers), or the \emph{$K$-strong Giuga numbers}, which are composite numbers such that $S_{K(n-1)}(n) \equiv -1$ (mod $n$) (see \cite{nos1}). 

In this paper we characterize, in terms of the prime divisors of $n$, the pairs $(k,n)$ for which $n$ divides $S_{k}(n)$. This characterization is given in the following theorem.
\begin{teor}
Let $n$ and $k$ be any integers. Then, $S_k(n)\equiv 0$ (mod $n$) if and only if one of the following holds:
\begin{itemize}
\item[i)]
$n$ is odd and $p-1 \nmid  k $ for every $p$ prime divisor of $n$.
\item[ii)]
$n$ is a multiple of 4 and $k>1$ is odd.
\end{itemize}
\end{teor}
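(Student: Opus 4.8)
\emph{Proof strategy.}
The plan is to reduce the congruence to prime-power moduli and then evaluate $S_k(p^a)$ modulo $p^a$. Throughout I take $k\geq 1$ (for $k=0$, $S_0(n)=n\equiv 0\pmod n$ trivially). First I would write $n=p^a m$ with $p^a\parallel n$, so $\gcd(m,p)=1$: as $j$ runs over $1,\dots,n$, each residue class modulo $p^a$ is attained exactly $m$ times, and $j^k\bmod p^a$ depends only on $j\bmod p^a$, so $S_k(n)\equiv m\,S_k(p^a)\pmod{p^a}$; since $m$ is a unit modulo $p^a$, this gives $S_k(n)\equiv 0\pmod{p^a}$ if and only if $S_k(p^a)\equiv 0\pmod{p^a}$. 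By the Chinese Remainder Theorem, $S_k(n)\equiv 0\pmod n$ if and only if $p^a\mid S_k(p^a)$ for every prime power $p^a\parallel n$, so everything reduces to the prime-power case.

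For $S_k(p^a)\bmod p^a$ I would split $\sum_{j=1}^{p^a}j^k$ according to $t=v_p(j)$: with $j=p^t u$, $p\nmid u$, $1\le u\le p^{a-t}$ (the $u$'s running over a complete set of representatives of $(\mathbb{Z}/p^{a-t}\mathbb{Z})^{*}$, and $j=p^a$ contributing $0$ since $k\geq1$), this yields
$$S_k(p^a)\equiv\sum_{t=0}^{a-1}p^{tk}\,\sigma_{a-t}\pmod{p^a},\qquad \sigma_m:=\sum_{\substack{1\le u\le p^m\\ p\nmid u}}u^k .$$
The integers $\sigma_m$ are computed by a lifting recursion. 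For $p$ odd and $m\geq 1$, the $p$ integers in $[1,p^{m+1}]$ coprime to $p$ and congruent to a fixed such $u$ are $u+p^m t$ ($0\le t<p$), and $(u+p^m t)^k\equiv u^k+k u^{k-1}p^m t\pmod{p^{m+1}}$; summing over $t$ kills the linear term (since $\tfrac{p-1}{2}\in\mathbb{Z}$), so $\sigma_{m+1}\equiv p\,\sigma_m\pmod{p^{m+1}}$. Combined with Proposition 1, which gives $\sigma_1\equiv-1\pmod p$ if $p-1\mid k$ and $\sigma_1\equiv 0\pmod p$ otherwise, this yields $\sigma_m\equiv -p^{m-1}\pmod{p^m}$ when $p-1\mid k$ and $\sigma_m\equiv 0\pmod{p^m}$ when $p-1\nmid k$. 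For $p=2$: if $k$ is odd, the fixed-point-free involution $u\leftrightarrow 2^m-u$ on odd residues (valid for $m\geq2$) gives $\sigma_m\equiv 0\pmod{2^m}$ for $m\geq2$, while $\sigma_1=1$; if $k$ is even, the same lifting computation gives $\sigma_{m+1}\equiv 2\,\sigma_m\pmod{2^{m+1}}$, hence $\sigma_m\equiv 2^{m-1}\pmod{2^m}$ for all $m\geq1$.

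Substituting back, for $p$ odd each term with $t\geq1$ has $p$-adic valuation $\geq a$ (when $p-1\nmid k$ because $\sigma_{a-t}\equiv 0\pmod{p^{a-t}}$; when $p-1\mid k$ because then $k\geq p-1\geq2$), so $S_k(p^a)\equiv\sigma_a\pmod{p^a}$, and $p^a\mid S_k(p^a)$ exactly when $p-1\nmid k$. For $p=2$ with $k$ even the same reasoning gives $S_k(2^a)\equiv 2^{a-1}\not\equiv0\pmod{2^a}$; for $p=2$ with $k$ odd only the $t=a-1$ term survives, so $S_k(2^a)\equiv 2^{(a-1)k}\pmod{2^a}$, which vanishes exactly when $a\geq2$ and $k>1$. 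Assembling over the prime powers dividing $n$: if $n$ is odd the requirement is $p-1\nmid k$ for all $p\mid n$, i.e. (i); if $n\equiv2\pmod4$ the factor $2\parallel n$ makes $S_k(n)\not\equiv 0\pmod n$, in agreement with neither (i) nor (ii); and if $4\mid n$ the factor $2^a$ ($a\geq2$) forces $k$ odd with $k>1$, which then forces $p-1\nmid k$ automatically for every odd prime $p\mid n$ (an even number $\geq2$ never divides an odd one), so the sole condition is (ii). I expect the main obstacle to be the prime $2$: there the lifting recursion $\sigma_{m+1}\equiv 2\,\sigma_m\pmod{2^{m+1}}$ holds only for even $k$ (for odd $k$ one uses the pairing $u\leftrightarrow 2^m-u$ instead), and one must carefully track how the parity of $k$ interacts with the modulus $2^a$ in the displayed sum — this is what produces the $k>1$ restriction in (ii).
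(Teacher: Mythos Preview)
Your argument is correct. The reduction $S_k(n)\equiv m\,S_k(p^a)\pmod{p^a}$ via the Chinese Remainder Theorem, the stratification of $S_k(p^a)$ by $v_p(j)$, and the lifting recursion $\sigma_{m+1}\equiv p\,\sigma_m\pmod{p^{m+1}}$ (for odd $p$, and for $p=2$ with $k$ even) all check out, as does the pairing $u\leftrightarrow 2^m-u$ for $p=2$ with $k$ odd and the final assembly.

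Your route and the paper's overlap in structure---both reduce to prime-power moduli, both use the pairing at $p=2$ for odd $k$, and both use a halving/doubling recursion for $S_k(2^m)$ when $k$ is even---but diverge in the treatment of odd primes. The paper handles the two directions separately: when $p-1\mid k$ it computes $\sum_{j\le p^m}j^k\equiv -p^{m-1}\pmod{p^m}$ by identifying the multiset of $k$-th powers of units with an explicit arithmetic progression, and when $p-1\nmid k$ it takes a primitive root $\alpha$ of $(\mathbb{Z}/p^r\mathbb{Z})^*$, shows $\alpha^k-1$ is a unit, and sums the resulting geometric series, then decomposes $S_k(n)$ by $\gcd(j,n)=d$. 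Your single lifting recursion replaces both of these devices at once, feeding only on the elementary base case $\sigma_1\equiv -1$ or $0\pmod p$ from Proposition~1; this makes the odd-prime analysis more uniform and avoids invoking the cyclicity of $(\mathbb{Z}/p^r\mathbb{Z})^*$. The paper's primitive-root argument, on the other hand, is slightly more conceptual in that it explains \emph{why} the unit sum vanishes (it is a geometric series with invertible ratio minus one), whereas your recursion establishes the vanishing inductively without this interpretation.
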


Moreover, inspired in Giuga's ideas we investigate the congruence $S_{f(n)}(n) \equiv 0$ (mod $n$) for some functions $f$. This work started in \cite{nos2}) with the case $f(n)=\frac{(n-1)}{2}$. It will be of special interest the case of arithmetic functions $f$ such that $p-1 \nmid f(p)$ for any prime $p$. 

Let $f:\mathbb{N}\longrightarrow\mathbb{N}$ be a function. In what follows we will consider the following subset of $\mathbb{N}$ associated to $f$:
$$\mathcal{M}_f :=\{n: n\ \textrm{divides}\ S_{f(n)}(n) \}$$

We have studied the sets $\mathcal{M}_f$ in the affine case ($f(n)=a n +b $) and in some cases such that $\mathcal{M}_f$ contains the set of prime numbers. We have characterized the elements of these sets and, in some cases, we have computed their asymptotic density.

\section{Sum of the $n$ initial $k$-th powers modulo $n$}

Recall that, given two integers $n$ and $k$, we define $S_k(n):=\displaystyle{\sum_{j=1}^{n} j^k}$. In this section we present the main results of the paper. In particular we will characterize the pairs $(n,k)$ such that $n$ divides $S_k(n)$. If $k=0$, clearly $S_k(n)=n$ and there is no problem to study. Thus, in what follows we will assume $k\neq 0$.

We will start this section with a technical lemma.

\begin{lem}
Let $k>0$ be any integer and let $p$ be a prime such that $p-1 \mid k$. Then, for every $m>0$:
$$\sum_{j=1}^{p^m}j^k\equiv -p^{m-1}\ (mod\ p^m).$$
\end{lem}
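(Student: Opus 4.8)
The plan is to prove the statement by induction on $m$, peeling off one factor of $p$ at each stage. Throughout we take $p$ to be an odd prime: for $p=2$ the hypothesis $p-1\mid k$ imposes nothing, and that case has to be handled separately. Observe first that, since $p-1\mid k$ and $k>0$, we automatically have $k\ge p-1\ge 2$, a bound that will be used repeatedly. The base case $m=1$ is immediate from Proposition 1: as $p-1\mid k$, it gives $S_k(p)\equiv -1=-p^{0}\pmod p$.

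For the inductive step ($m\ge 2$) I would split $S_k(p^m)=\sum_{j=1}^{p^m}j^k$ according to the residue of $j$ modulo $p$. The multiples of $p$ contribute $\sum_{\ell=1}^{p^{m-1}}(p\ell)^k=p^{k}S_k(p^{m-1})$; by the induction hypothesis $S_k(p^{m-1})=p^{m-2}(-1+pc)$ for some integer $c$, so $v_p\!\left(p^{k}S_k(p^{m-1})\right)\ge k+m-2\ge m$ and this block is $\equiv 0\pmod{p^m}$. For each fixed $a\in\{1,\dots,p-1\}$ the remaining terms are $\sum_{\ell=0}^{p^{m-1}-1}(a+p\ell)^k$, which the binomial theorem turns into $\sum_{i=0}^{k}\binom ki a^{k-i}p^{i}\big(\sum_{\ell=0}^{p^{m-1}-1}\ell^{i}\big)$. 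The $i=0$ term is $a^{k}p^{m-1}$, and the $i=1$ term equals $ka^{k-1}p\cdot\tfrac{p^{m-1}(p^{m-1}-1)}{2}=ka^{k-1}p^{m}\cdot\tfrac{p^{m-1}-1}{2}$, which lies in $p^{m}\mathbb Z$ precisely because $p$ is odd (so $\tfrac{p^{m-1}-1}{2}\in\mathbb Z$).

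The terms with $2\le i\le k$ are where the real work is, and I expect them to be the main obstacle. I would control them via the auxiliary estimate
$$v_p\!\left(\sum_{\ell=0}^{p^{n}-1}\ell^{i}\right)\ge n-1\qquad(n\ge 1,\ i\ge 1).$$
This follows by a short induction on $n$: the analogous split-by-residue computation gives $\sum_{\ell=0}^{p^{n}-1}\ell^{i}\equiv p\sum_{r=0}^{p^{n-1}-1}r^{i}\pmod{p^{n}}$ (again discarding a factor $\tfrac{p-1}{2}$ using that $p$ is odd), whence $v_p\big(\sum_{\ell=0}^{p^n-1}\ell^i\big)\ge 1+(n-2)=n-1$ by the inductive hypothesis, the case $n=1$ being trivial. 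Taking $n=m-1$, each term with $i\ge 2$ then has $p$-adic valuation at least $i+(m-2)\ge m$ and drops out, so $\sum_{\ell=0}^{p^{m-1}-1}(a+p\ell)^k\equiv a^{k}p^{m-1}\pmod{p^{m}}$.

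Summing over $a=1,\dots,p-1$, the non-multiples of $p$ contribute $p^{m-1}\sum_{a=1}^{p-1}a^{k}\pmod{p^m}$. Finally I would evaluate $\sum_{a=1}^{p-1}a^{k}=S_k(p)-p^{k}\equiv S_k(p)\equiv -1\pmod p$, once more by Proposition 1 and the hypothesis $p-1\mid k$; hence $p^{m-1}\sum_{a=1}^{p-1}a^{k}\equiv -p^{m-1}\pmod{p^m}$. Adding the three contributions gives $S_k(p^m)\equiv 0+(-p^{m-1})=-p^{m-1}\pmod{p^m}$, which closes the induction. The only genuinely delicate point is the valuation bound on $\sum_{\ell=0}^{p^n-1}\ell^i$ together with the check that, multiplied by $p^i$, it beats $p^m$ for every $2\le i\le k$; everything else is bookkeeping around the two uses of Proposition 1 and the parity of $p$.
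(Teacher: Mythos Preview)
Your proof is correct, and your restriction to odd $p$ is not only legitimate but necessary: the lemma as stated fails for $p=2$ (e.g.\ $S_3(4)=100\equiv 0\not\equiv -2\pmod 4$), so any valid argument must treat that case separately. Your route, however, is genuinely different from the paper's. The paper gives a direct, non-inductive computation: it drops the multiples of $p$, then argues that the multiset $\{j^k \bmod p^m : 1\le j\le p^m,\ p\nmid j\}$ coincides with the arithmetic progression $\{1,p+1,\dots,1+(p^{m-1}-1)p\}$ with each term repeated $p-1$ times, and sums that progression explicitly. You instead run an induction on $m$, expanding $(a+p\ell)^k$ binomially and controlling the tail via the auxiliary bound $v_p\bigl(\sum_{\ell=0}^{p^n-1}\ell^i\bigr)\ge n-1$. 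The paper's approach is shorter and more structural, leaning on the shape of the $k$-th power map on $(\mathbb{Z}/p^m\mathbb{Z})^\times$; your approach is longer but entirely elementary (no primitive roots) and more robust---in particular it is uniform in $k$, whereas the paper's claim that each element of the progression appears exactly $p-1$ times is only literally true when $p\nmid k$ (for $p=3$, $m=2$, $k=6$ every unit satisfies $j^6\equiv 1\pmod 9$), so your argument sidesteps a delicate point the paper glosses over.
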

\begin{proof}
Clearly $\displaystyle{\sum_{j=1}^{p^m}j^k\equiv \sum_{\substack{1\leq j\leq p^m\\ (j,p^m)=1}} j^k}$ (mod $p^m$), with this latter sum consisting of $p^{m-1}(p-1)$ summands. Since $p-1 \mid k$, if $a$ and $b$ are such that $p \nmid a,b$ then $a^k-b^k\equiv 0$ (mod $p$). This implies that these summands are the elements of the arithmetic sequence $\{1,p+1,\dots,p(p^{m-1}-1)+1\}$, where every element appears exactly $p-1$ times. Consequently:
\begin{align*}
\sum_{j=1}^{p^m}j^k&\equiv \sum_{\substack{1\leq j\leq p^m\\ (j,p^m)=1}} j^k=(p-1)\sum_{i=1}^{p^{m-1}-1} 1+ip=(p-1)\frac{p^{m-1}[1+(1+p^m-p)]}{2}\\&\equiv (p-1)p^{m-1}\equiv -p^{m-1}\ \textrm{(mod $p^m$)}.
\end{align*}
\end{proof}

With the help of this lemma we can prove the following proposition
\begin{prop}
Let $n$ be an odd integer and $k$ be any integer. Then $n$ divides $S_k(n)$ if and only if $\gcd(k,p-1)<p-1$ for every $p$, prime divisor of $n$.
\end{prop}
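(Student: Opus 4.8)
The plan is to pass to prime-power moduli via the Chinese Remainder Theorem, dispatch the easy implication with the Lemma just proved, and dispatch the remaining implication with a short group-translation argument followed by an induction on the exponent.

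Write $n=\prod_i p_i^{m_i}$ in standard form. Since the prime powers $p_i^{m_i}$ are pairwise coprime, $n\mid S_k(n)$ holds if and only if $p_i^{m_i}\mid S_k(n)$ for every $i$. Fix one of them, set $p=p_i$ and $m=m_i$ (note that $p$ is odd, since $n$ is), and write $n=p^m t$ with $\gcd(p,t)=1$. Splitting the sum $S_k(n)=\sum_{j=1}^{n}j^k$ into $t$ consecutive blocks of $p^m$ terms and using $(\ell p^m+r)^k\equiv r^k\pmod{p^m}$, one obtains
$$S_k(n)\equiv\sum_{\ell=0}^{t-1}\sum_{r=1}^{p^m}r^k=t\,S_k(p^m)\pmod{p^m},$$
and since $\gcd(t,p)=1$ this gives $p^m\mid S_k(n)\iff p^m\mid S_k(p^m)$. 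Hence the proposition reduces to the statement: for an odd prime $p$ and every $m\ge 1$, $p^m\mid S_k(p^m)$ if and only if $p-1\nmid k$ (equivalently $\gcd(k,p-1)<p-1$).

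If $p-1\mid k$, the Lemma gives $S_k(p^m)\equiv -p^{m-1}\pmod{p^m}$, which is not $\equiv 0$, so $p^m\nmid S_k(p^m)$. Assume now $p-1\nmid k$. I would first record that
$$\sigma_\ell:=\sum_{x\in(\mathbb{Z}/p^\ell\mathbb{Z})^\times}x^k\equiv 0\pmod{p^\ell}\qquad\text{for every }\ell\ge 1.$$
Indeed, pick a primitive root $g_0$ modulo $p$; then $g_0^k\not\equiv 1\pmod p$ precisely because $p-1\nmid k$. Viewing $g_0$ as a unit $g$ modulo $p^\ell$, the map $x\mapsto gx$ permutes $(\mathbb{Z}/p^\ell\mathbb{Z})^\times$, so $\sigma_\ell\equiv g^k\sigma_\ell\pmod{p^\ell}$, i.e. $(g^k-1)\sigma_\ell\equiv 0\pmod{p^\ell}$; and $g^k-1$ is a unit modulo $p^\ell$, being $\not\equiv 0\pmod p$, whence $\sigma_\ell\equiv 0\pmod{p^\ell}$. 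Next, separating out the multiples of $p$ in $[1,p^m]$ yields the recursion
$$S_k(p^m)=A_m+p^k S_k(p^{m-1}),\qquad A_m:=\sum_{\substack{1\le j\le p^m\\ p\nmid j}}j^k,$$
and since the integers counted in $A_m$ form a complete set of representatives of $(\mathbb{Z}/p^m\mathbb{Z})^\times$ we get $A_m\equiv\sigma_m\equiv 0\pmod{p^m}$. An induction on $m$ now finishes the argument: $S_k(p^0)=1$ is divisible by $p^0=1$, and if $p^{m-1}\mid S_k(p^{m-1})$ then $p^kS_k(p^{m-1})$ is divisible by $p^{k+m-1}$, hence by $p^m$ because $k\ge 1$, while $A_m\equiv 0\pmod{p^m}$; so $p^m\mid S_k(p^m)$. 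Combining the two directions with the reduction above proves the proposition.

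The step I expect to be the main obstacle is precisely this last implication for prime powers. The translation identity $\sigma_\ell\equiv g^k\sigma_\ell$ only applies to the unit group — multiplication by $g$ is not a permutation of all residues modulo $p^\ell$ — so one cannot avoid peeling off the multiples of $p$ and then verifying that the $p$-adic valuations of the two pieces $A_m$ and $p^kS_k(p^{m-1})$ both reach $m$. That bookkeeping, rather than any deep ingredient, is where the care lies; everything else is routine.
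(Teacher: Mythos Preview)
Your proof is correct. The skeleton matches the paper's---reduce to each prime-power modulus, invoke the Lemma when $p-1\mid k$, and kill the sum over units with a primitive-root argument when $p-1\nmid k$---but the way you pass from the unit sum to the full sum is genuinely different. The paper stays at the level of $n$ and decomposes
\[
S_k(n)=\sum_{d\mid n} d^k \sum_{\substack{1\le j\le n/d\\ (j,n/d)=1}} j^k,
\]
applying the unit-sum vanishing (proved via a geometric series with a generator of $(\mathbb{Z}/p_i^{r_i}\mathbb{Z})^\times$) to each $n/d$. You instead work entirely inside a single prime power, peel off the multiples of $p$ via $S_k(p^m)=A_m+p^kS_k(p^{m-1})$, and induct on $m$. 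Your route is a little more self-contained and the valuation bookkeeping is transparent; the paper's divisor decomposition is shorter once set up but needs the extra counting identity $\sum_{(j,n)=1}j^k\equiv\varphi(n/p_i^{r_i})\sum_{(j,p_i^{r_i})=1}j^k\pmod{p_i^{r_i}}$. A minor nicety in your version is that you only need $g$ to reduce to a primitive root mod $p$, not to generate $(\mathbb{Z}/p^\ell\mathbb{Z})^\times$, since multiplication by any unit already permutes the unit group.
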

\begin{proof}
Put $n=p_1^{r_1}\cdots p_s^{r_s}$ the prime decomposition of $n$.

Assume that there exists $i\in\{1,\dots,s\}$ such that $p_i-1=\gcd(k,p_i-1)$; i.e., such that $p_i-1|k$. Then, since $S_k(n)=\displaystyle{\sum_{j=1}^nj^k\equiv \frac{n}{p_i^{r_i}}\sum_{j=1}^{p_i^{r_i}}j^k}$ (mod $p_i^{r_i}$), it follows by the previous lemma that $S_k(n)\equiv\frac{n}{p_i^{r_i}}(p_i-1)p_i^{r_i-1}\not\equiv 0$ (mod $p_i^{r_i}$) and $n$ does not divide $S_k(n)$.

Conversely, let us assume that $\gcd(k,p_i-1)<p_i-1$ for every $i$. We claim that $\displaystyle{\sum_{\substack{(j,n)=1\\1\leq j\leq n}} j^k\equiv 0}$ (mod $n$).
Let $i\in\{1,\dots,s\}$ and let $\alpha$ be a generator of $\mathcal{U}\left(\mathbb{Z}/p_i^{r_i}\mathbb{Z}\right)$, we put $x=\alpha^k$. If $x-1$ was not a unit, then $p_i\mid x-1=\alpha^k-1$. Since $p_i\mid \alpha^{p_i-1}-1$ it follows that $\alpha^d\equiv 1$ (mod $p_i$) with $d=\gcd(k,p_i-1)$. Consequently $\alpha^{dp_i^{r_i-1}}\equiv 1$ (mod $p_i^{r_1}$) which is impossible since $dp_i^{r_i-1}<\varphi(p_i^{r_1})$. Thus:
$$\sum_{\substack{(j,p_i^{r_1})=1\\1\leq j\leq p_i^{r_i}}}j^k\equiv\frac{x^{\varphi(p_i^{r_i})+1}-x}{x-1}\equiv 0\ \textrm{(mod $p_i^{r_i}$)}.$$
Moreover, since $\displaystyle{\sum_{\substack{(j,n)=1\\1\leq j\leq n}}j^k\equiv \varphi\left(\frac{n}{p_i^{r_i}}\right)\sum_{\substack{(j,p_i^{r_i})=1\\1\leq j\leq p_i^{r_i}}}j^k\equiv 0}$ (mod $p_i^{r_i}$) and this holds for every $i$, the claim follows.

Now, let $d$ be any divisor of $n$. We have that $\displaystyle{\sum_{\substack{(j,n)=d\\1\leq j\leq n}}j^k=d^k\sum_{\substack{(j,n/d)=1\\1\leq j\leq n/d}}j^k}$ so it is enough to apply the previous considerations and to sum over $d$ in order to complete the proof.
\end{proof}

Now we will turn to the even case. This is done in the following propositions.

\begin{prop}
Let $n$ be an integer with $n\equiv 2$ (mod 4) and let $k$ be any integer. Then:
\begin{itemize}
\item[i)]
$S_k(n)\not\equiv 0$ (mod $n$).
\item[ii)]
$S_k(n)\equiv 0$ (mod $\frac{n}{2}$) if and only if $\gcd(k,p-1)<p-1$ for every $p$, odd prime divisor of $n$.
\end{itemize}
\end{prop}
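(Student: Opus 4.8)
The plan is to reduce to the odd case already settled in the previous proposition, treating the prime $2$ by a separate parity count. Throughout, write $n=2m$ with $m$ odd (this is exactly the hypothesis $n\equiv 2\pmod 4$).

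For (i), I would work modulo $2$ only, which is enough since $2\mid n$. Exactly $m$ of the integers $1,\dots,2m$ are odd, and an odd base gives an odd power while an even base gives an even power; hence $S_k(n)=\sum_{j=1}^{2m}j^k\equiv m\pmod 2$. As $m=n/2$ is odd, $S_k(n)$ is odd, so $2\nmid S_k(n)$ and a fortiori $n\nmid S_k(n)$.

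For (ii), the key step is the congruence $S_k(2m)\equiv 2\,S_k(m)\pmod m$. To obtain it I would split $S_k(2m)=\sum_{j=1}^{m}j^k+\sum_{j=m+1}^{2m}j^k$ and, in the second sum, substitute $j=m+t$ with $1\le t\le m$, so that $(m+t)^k\equiv t^k\pmod m$ term by term; thus the second sum is $\equiv S_k(m)\pmod m$. Since $m$ is odd, $2$ is a unit modulo $m$, so $m\mid S_k(2m)$ if and only if $m\mid S_k(m)$. Applying the previous proposition to the odd integer $m$ then gives $m\mid S_k(m)$ if and only if $\gcd(k,p-1)<p-1$ for every prime $p\mid m$; since $m=n/2$ and $n$ have the same odd prime divisors, this is precisely the asserted condition, and (ii) follows.

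I do not expect a genuine obstacle: once the two elementary observations (the parity count and $S_k(2m)\equiv 2S_k(m)\pmod m$) are in place, the rest is bookkeeping. The only points that need care are to invoke the previous proposition with $m$, not $n$, as the odd modulus, and to note explicitly that discarding the factor $2$ is legitimate precisely because $m$ is odd. (Alternatively, one could re-run the primitive-root computation from the proof of the previous proposition with $m$ in place of $n$, but citing that result directly is cleaner.)
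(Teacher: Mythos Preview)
Your proof is correct and follows essentially the same approach as the paper: a parity count modulo $2$ for (i), and for (ii) the reduction $S_k(n)\equiv 2S_k(n/2)\pmod{n/2}$ followed by an appeal to the previous proposition on the odd integer $n/2$. Your write-up actually supplies a bit more detail (the explicit splitting of the sum and the observation that $2$ is a unit modulo $m$), but the underlying argument is identical.
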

\begin{proof}
\begin{itemize}
\item[i)]
We have that $j^k\equiv 0,1$ (mod 2) if $j$ even or odd respectively. This implies that $S_k(n)\equiv\frac{n}{2}\not\equiv 0$ (mod 2).
\item[ii)]
Proposition 3 implies that $S_{k}\left(\frac{n}{2}\right)\equiv 0$ (mod $\frac{n}{2}$) if and only if $\gcd(k,p-1)<p-1$ for every $p$ prime divisor of $\frac{n}{2}$. Since $S_k(n)\equiv 2S_{k}\left(\frac{n}{2}\right)$ (mod $\frac{n}{2}$), the result follows.
\end{itemize}
\end{proof}

\begin{prop}
Let $n$ be a multiple of 4 and let $k$ be an odd integer. Then:
\begin{itemize}
\item[i)]
If $k=1$, $S_k(n)\not\equiv 0$ (mod $n$).
\item[ii)]
If $k>1$, $S_k(n)\equiv 0$ (mod $n$).
\end{itemize}
\end{prop}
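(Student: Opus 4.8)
The statement concerns $n$ a multiple of $4$ and $k$ odd. For part i), when $k=1$ we have the closed form $S_1(n)=\frac{n(n+1)}{2}$. Since $4\mid n$, write $n=4m$; then $S_1(n)=2m(n+1)$, and $n\mid S_1(n)$ would require $4m\mid 2m(n+1)$, i.e. $2\mid n+1$, which is false since $n$ is even. So $S_1(n)\not\equiv 0\ (\mathrm{mod}\ n)$, and in fact $S_1(n)\equiv n/2\ (\mathrm{mod}\ n)$. This is a one-line computation and presents no obstacle.

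For part ii), the plan is to handle the odd part and the $2$-part of $n$ separately via the Chinese Remainder Theorem. Write $n=2^a t$ with $a\geq 2$ and $t$ odd. For the odd part: since $k$ is odd and $k>1$, for every odd prime $p\mid t$ we have $p-1$ even, so $p-1\nmid k$; hence $\gcd(k,p-1)<p-1$ for every prime divisor of $t$, and Proposition 3 gives $t\mid S_k(t)$. Then, exactly as in the reduction steps already used in the paper, $S_k(n)\equiv \frac{n}{t}\,S_k(t)\cdot(\text{unit factors})$ — more precisely $S_k(n)\equiv (n/t)\sum_{j=1}^{t}j^k\ (\mathrm{mod}\ t)$ — so $t\mid S_k(n)$.

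For the $2$-part, I need $2^a\mid S_k(n)$. Here I would argue that since $k$ is odd, the map $j\mapsto n-j$ pairs up residues with $j^k+(n-j)^k\equiv 0\ (\mathrm{mod}\ 2^a)$ once one is careful about the "middle" terms. Concretely, modulo $2^a$ the sum $S_k(n)\equiv (n/2^a)\sum_{j=1}^{2^a}j^k$, and in $\sum_{j=1}^{2^a}j^k$ one pairs $j$ with $2^a-j$ for $1\le j\le 2^a-1$, leaving only the term $j=2^a$, which is $\equiv 0$. Since $k$ is odd, $j^k+(2^a-j)^k\equiv j^k-j^k=0\ (\mathrm{mod}\ 2^a)$ because $(2^a-j)\equiv -j$ and $(-j)^k=-j^k$. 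Hence $\sum_{j=1}^{2^a}j^k\equiv 0\ (\mathrm{mod}\ 2^a)$, giving $2^a\mid S_k(n)$. Combining with $t\mid S_k(n)$ and $\gcd(2^a,t)=1$ yields $n\mid S_k(n)$.

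The only genuinely delicate point is making sure the pairing argument for the $2$-part is valid for \emph{all} $a\ge 2$ and does not secretly need $k>1$ in a way that conflicts with $k=1$; in fact the pairing works for $k=1$ too at the level of $\sum_{j=1}^{2^a}j^k$, so the failure in part i) must come entirely from the odd part being absent when $t=1$ — and indeed when $n=2^a$ the product $(n/t)$ trick still gives $2^a\mid S_1(n)$, so I should double-check whether part i) really fails only because of the global closed-form computation rather than a $2$-adic obstruction; I expect the resolution is that for $k=1$ the pairing $j^k+(2^a-j)^k = 2^a \equiv 0$ is an equality of integers but the count of pairs is $2^{a-1}-1$ fractional-free, and reconciling this with $S_1(2^a)=2^{a-1}(2^a+1)$ shows the argument is consistent — the apparent discrepancy being that part i)'s $n$ has an odd part playing no role, so I must verify the $k=1$ exclusion is genuinely about $S_1$ not being divisible by the *full* $n$ and not contradict the $k>1$ claim. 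The cleanest writeup avoids this tension by treating i) purely through the closed form and ii) purely through CRT plus the pairing, keeping the two arguments independent.
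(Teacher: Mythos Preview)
Your overall strategy---split $n=2^at$ via CRT, handle the odd part $t$ through Proposition~3, and handle the $2$-part by the pairing $j\leftrightarrow 2^a-j$---is exactly the paper's approach. But your pairing argument has a genuine gap, and that gap is the source of the confusion in your final paragraph.

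The involution $j\mapsto 2^a-j$ on $\{1,\dots,2^a-1\}$ has a \emph{fixed point} at $j=2^{a-1}$, so after cancelling the genuine pairs you are left with
\[
\sum_{j=1}^{2^a} j^k \;\equiv\; (2^{a-1})^k \pmod{2^a},
\]
not $0$ as you wrote. Now $(2^{a-1})^k=2^{(a-1)k}$, and for $a\ge 2$ this is $\equiv 0\pmod{2^a}$ exactly when $(a-1)k\ge a$, i.e.\ when $k\ge 2$. So the hypothesis $k>1$ enters \emph{here}, in the $2$-adic computation, not in the odd part (the odd-part argument works for every odd $k$, including $k=1$). This immediately dissolves your worry: for $k=1$ the correct pairing gives $\sum_{j=1}^{2^a}j\equiv 2^{a-1}\pmod{2^a}$, matching $S_1(2^a)=2^{a-1}(2^a+1)$, and showing that the obstruction in part~i) is genuinely $2$-adic, not a matter of the odd part. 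Once you insert the missing middle term $(2^{a-1})^k$ and note it vanishes modulo $2^a$ for odd $k\ge 3$, your proof of ii) is complete and coincides with the paper's.
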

\begin{proof}
The first part is obvious. For the second part, put $n=2^mn'$ with $m>1$ and $n'$ being odd.

Since $k$ is odd it follows that $\gcd(k,p-1)<p-1)$ for every $p$ prime divisor of $n'$ and Proposition 1 implies that $S_k(n)\equiv 0$ (mod $n'$). On the other hand, we have that $S_k(n)\equiv 2^mS_k(n')\equiv 0$ (mod $n'$).

Moreover, $S_k(n)\equiv n'S_{k}(2^m)$ (mod $2^m$). Now, $k>1$ being odd, if $j\in\{1,\dots,2^m-1\}$ we have that $j^k\equiv -(2^m-j)^k$ (mod $2^m$) so $S_k(2^m)\equiv (2^{m-1})^k\equiv 0$ (mod $2^m$) and the result follows.
\end{proof}

\begin{prop}
Let $n$ be a multiple of 4 and let $k$ be an even integer. Then:
\begin{itemize}
\item[i)]
$S_k(n)\not\equiv 0$ (mod $n$).
\item[ii)]
$S_k(n)\equiv 0$ $\left(\ \textrm{mod}\ \frac{n}{2}\right)$ if and only if $\gcd(k,p-1)<p-1$ for every $p$, odd prime divisor of $n$.
\end{itemize}
\end{prop}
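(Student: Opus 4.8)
The plan is to reduce every assertion, by means of the elementary congruence $S_k(n)\equiv\frac{n}{d}\,S_k(d)\pmod d$ valid for every divisor $d\mid n$ (the same reduction already used in the proofs of the preceding propositions), to a single computation of $S_k(2^m)$ modulo $2^m$, where we write $n=2^mn'$ with $m\geq 2$ and $n'$ odd. Accordingly, the first and main step would be to prove the auxiliary congruence
$$S_k(2^m)\equiv 2^{m-1}\pmod{2^m}\qquad\text{for }k\text{ even and }m\geq 2,$$
by induction on $m$. The base case $m=2$ is the direct computation $S_k(4)=1+2^k+3^k+4^k\equiv 1+0+1+0=2\pmod 4$, using that $k$ is even with $k\geq 2$. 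For the inductive step (so $m\geq 3$, assuming the statement for $m-1$), pair each $j\in\{1,\dots,2^{m-1}\}$ with $j+2^{m-1}$. By the binomial theorem $(j+2^{m-1})^k\equiv j^k+k\,2^{m-1}j^{k-1}\pmod{2^m}$, since every further term carries a factor $2^{2(m-1)}\equiv 0\pmod{2^m}$ (here $m\geq 2$), and because $k$ is even, $k\,2^{m-1}\equiv 0\pmod{2^m}$, so in fact $(j+2^{m-1})^k\equiv j^k\pmod{2^m}$. Summing over $j$ gives $S_k(2^m)\equiv 2\,S_k(2^{m-1})\pmod{2^m}$, and the induction hypothesis $S_k(2^{m-1})\equiv 2^{m-2}\pmod{2^{m-1}}$ then yields $S_k(2^m)\equiv 2^{m-1}\pmod{2^m}$.

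Granting this, part~(i) is immediate: $S_k(n)\equiv n'\,S_k(2^m)\pmod{2^m}$, and since $n'$ is odd while $S_k(2^m)\equiv 2^{m-1}\not\equiv 0\pmod{2^m}$, we get $S_k(n)\not\equiv 0\pmod{2^m}$; as $2^m\mid n$, this forces $n\nmid S_k(n)$. For part~(ii), write $\frac n2=2^{m-1}n'$ with $\gcd(2^{m-1},n')=1$ and treat the two coprime factors separately. Modulo $2^{m-1}$ one has $S_k(n)\equiv n'\,S_k(2^m)\equiv n'\,2^{m-1}\equiv 0\pmod{2^{m-1}}$, so this congruence holds unconditionally. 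Modulo $n'$ one has $S_k(n)\equiv 2^m S_k(n')\pmod{n'}$ with $2^m$ invertible, hence $n'\mid S_k(n)$ if and only if $n'\mid S_k(n')$, which by Proposition~3 (applicable since $n'$ is odd) holds precisely when $\gcd(k,p-1)<p-1$ for every prime $p\mid n'$, that is, for every odd prime $p\mid n$. Combining the two via the Chinese Remainder Theorem gives the stated equivalence.

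I expect the only genuine obstacle to be the auxiliary congruence for $S_k(2^m)$: the binomial estimate in the inductive step must be arranged so that the quadratic tail (which needs $2^{2(m-1)}\geq 2^m$, i.e.\ $m\geq 2$) and the linear term (which is killed exactly because $k$ is even) both vanish modulo $2^m$ simultaneously. Once that is secured, the rest is routine bookkeeping with the divisor-reduction congruence and an appeal to Proposition~3.
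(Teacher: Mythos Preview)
Your proof is correct and follows essentially the same strategy as the paper: write $n=2^m n'$, reduce via $S_k(n)\equiv \tfrac{n}{d}S_k(d)\pmod{d}$, establish $S_k(2^m)\equiv 2^{m-1}\pmod{2^m}$ by the recursion $S_k(2^m)\equiv 2S_k(2^{m-1})\pmod{2^m}$, and invoke Proposition~3 for the odd part. The only cosmetic difference is the pairing used to derive that recursion---you pair $j$ with $j+2^{m-1}$ via the binomial theorem, while the paper pairs $j$ with $2^m-j$ using $j^k\equiv(2^m-j)^k\pmod{2^m}$ for even $k$; both yield the same conclusion.
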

\begin{proof}
\begin{itemize}
\item[i)]
Put $n=2^mn'$ with $m>1$ and $n'$ being odd.

Again $S_k(n)\equiv n'S_k(2^m)$ (mod $2^m$) and, since $k$ is even, we have that $j^k\equiv (2^m-j)^k$ (mod $2^m$). This implies that $S_k(2^m)\equiv 2S_k(2^{m-1})$ (mod $2^m$). This allows us to reason inductively and conclude that:
$$S_k(2^m)\equiv 0 \Leftrightarrow S_k(1)\equiv 0\ \textrm{(mod 2)}.$$
Since the latter is false the result follows.
\item[ii)]
By Proposition 1 we have that $S_k(n)\equiv 2^mS_k(n')\equiv 0$ (mod $n'$) if and only if $\gcd(k,p-1)<p-1$ for every $p$ prime divisor of $n'$. Now $S_k(n)\equiv n'S_{k}(2^m)$ (mod $2^m$). Clearly $S_k(1)=1$ and if $a>1$ we have that $S_k(2^a)\equiv 2S_k(2^{a-1})$ (mod $2^a$). This implies that $S_k(2^m)\equiv 2^{m-1}$ (mod $2^m$) and, consequently, $S_k(n)\equiv n'S_k(2^m)\equiv n'2^{m-1}=\frac{n}{2}$ (mod $2^m$) and the proof is complete.
\end{itemize}
\end{proof}

All the previous work can be summarized in the following theorem.

\begin{teor}\label{TI}
Let $n$ and $k$ be any integers. Then, $S_k(n)\equiv 0$ (mod $n$) if and only if one of the following holds:
\begin{itemize}
\item[i)]
$n$ is odd and $ p-1 \nmid k$ for every $p$ prime divisor of $n$.
\item[ii)]
$n$ is a multiple of 4 and $k>1$ is odd.
\end{itemize}
\end{teor}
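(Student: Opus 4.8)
The plan is to treat Theorem \ref{TI} as a simple bookkeeping consequence of the four structural propositions just proved, organised by the residue of $n$ modulo $4$. Recall we have already agreed to assume $k\neq 0$; since for $k<0$ the quantity $S_k(n)$ need not even be an integer, the content is for $k\geq 1$, and the value $k=1$ will be the only boundary case requiring attention. The idea is to verify, in each of the three residue classes, that ``$n\mid S_k(n)$'' is logically equivalent to ``i) or ii)'', using that the two displayed conditions are tailored so that exactly one of them can possibly apply to a given $n$ (i) only when $n$ is odd, ii) only when $4\mid n$).

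First I would dispose of the case $n$ odd. Here condition ii) is vacuously false, so I must show $n\mid S_k(n)$ iff i) holds. By the proposition on odd moduli, $n\mid S_k(n)$ iff $\gcd(k,p-1)<p-1$ for every prime $p\mid n$; and $\gcd(k,p-1)<p-1$ is precisely the statement $p-1\nmid k$. Thus the odd case matches i) verbatim. Next, the case $n\equiv 2\pmod 4$: condition i) fails (because $n$ is even) and ii) fails (because $4\nmid n$), so I must show $n\nmid S_k(n)$ always, which is exactly part i) of the proposition for $n\equiv 2\pmod 4$.

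It remains to handle $4\mid n$, split according to the parity of $k$. If $k$ is even, both i) and ii) fail, and I must show $n\nmid S_k(n)$; this is part i) of the proposition for $4\mid n$ and $k$ even. If $k$ is odd, condition i) fails while ii) holds precisely when $k>1$; so I must show $n\mid S_k(n)$ iff $k>1$. For $k=1$ this is the easy observation $S_1(n)=n(n+1)/2\equiv n/2\not\equiv 0\pmod n$ (part i) of the proposition for $4\mid n$, $k$ odd), and for $k>1$ it is part ii) of that same proposition. Since the three residue classes exhaust all $n$, combining these equivalences yields the theorem. There is no genuine obstacle here: the only points demanding care are to confirm that the propositions collectively cover every $(n,k)$ with $k\geq 1$ and to translate ``$\gcd(k,p-1)<p-1$'' into ``$p-1\nmid k$'' so that the odd case lines up with condition i).
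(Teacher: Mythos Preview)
Your proposal is correct and matches the paper's approach exactly: the theorem is stated as a summary of Propositions~3--6, and your case split by $n\bmod 4$ (and by parity of $k$ when $4\mid n$) is precisely the bookkeeping the paper leaves implicit. There is nothing to add.
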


We have just characterized the pairs $(n,k)$ such that $n$ divides $S_k(n)$. It follows immediately from this characterization that, given $n \in \mathbb{N}$ the complement of the set $\mathcal{W}_n:=\{k: S_{k}(n) \equiv 0\textrm{ (mod }n) \}$ is:
$$
\mathbb{N} \setminus \mathcal{W}_{n}=\begin{cases}
2\mathbb{N}\cup\{1\} & \textrm{if}\ n\equiv 0\ \textrm{(mod 4)},\\
\mathbb{N} & \textrm{if}\ n\equiv 2\ \textrm{(mod 4)},\\
\bigcup_{p \mid n} (p-1)\mathbb{N} & \textrm{if}\ n\equiv 1,3\ \textrm{(mod 4)}.
\end{cases}
$$
In the same way, for every $k \in \mathbb{N}$ the complement of $\mathcal{H}_k:=\{n : S_{k}(n) \equiv 0\textrm{ (mod }n) \}$ consists of a finite union of arithmetic sequences. Namely, if we denote $\mathcal{P}_k:=\{p\ \textrm{odd prime}: p-1 \mid k\}$, we have that:

$$
\mathbb{N} \setminus\mathcal{H}_{k}=\begin{cases}
2\mathbb{N} & \textrm{if}\ k=1,\\
\bigcup_{p\in\mathcal{P}_k}2p(2\mathbb{N}+1) & \textrm{if}\ k>1\ \textrm{is odd},\\
\bigcup_{p\in\mathcal{P}_k}p\mathbb{N} & \textrm{if}\ k\ \textrm{is even}.
\end{cases}
$$

Now, we could consider cases when $k=f(n)$ depends on $n$ and then we will be interested in characterizing the values of $n$ such that $S_{f(n)}(n)\equiv 0$ (mod $n$). This will be done in the following sections for various choices of the function $f$.

\section{The affine case}

In this section we will focus in the case when $f$ is a affine function; i.e., $f(n)=an+b$. In what follows we will denote
by $f_{a,b}(n):= an + b$. Recall that we defined $\mathcal{M}_f :=\{n:n\ \textrm{divides}\ S_{f(n)}(n) \}$. In what follows it will be easier to characterize the complement $\mathbb{N}\setminus\mathcal{M}_f$ instead of $\mathcal{M}_f$ itself.

Let us introduce some notation. Given $(a,b) \in \mathbb{N}\times \mathbb{Z}$, we will consider the set:
$$\mathcal{P}_{a,b}:=\{p\ \textrm{odd prime}: b\equiv 0\ \textrm{(mod $\gcd(ap,p-1)$)}\}.$$
and if $(a,b,p) \in \mathbb{N}\times \mathbb{Z} \times \mathcal{P}_{a,b} $ we define
$$\Xi(a,b,p):=a^{-1}\min \{  x\in \mathbb{N} : x \equiv 0\textrm{ (mod }ap), x \equiv -b\textrm{ (mod }(p-1))\}.$$
With this notation in mind we can prove the following result  \cite{NIVEN}.

\begin{teor}
Let $(a,b) \in \mathbb{N}\times \mathbb{Z}$. Then:
\begin{itemize}
\item[i)] If $a$ and $b$ are even,
$$\mathbb{N} \setminus \mathcal{M}_{f_{a,b}}=2 \mathbb{N}\ \cup\bigcup_{p \in \mathcal{P}_{a,b}}\{ \Xi(a,b,p)+\frac{s}{a} {\rm{lcm}}(ap,p-1):s \in \mathbb{N} \}.$$
\item[ii)]  If $a$ and $b$ are odd,
$$\mathbb{N} \setminus \mathcal{M}_{f_{a,b}}=\{n: n\equiv 2\ {\rm{(mod\ 4)}}\}\ \cup\bigcup_{p \in \mathcal{P}_{a,b}}\{ \Xi(a,b,p)+\frac{s}{a} {\rm{lcm}}(ap,p-1):s \in \mathbb{N} \}.$$
\item[iii)] If $a$ is even and $b$ is odd, then
$$\mathbb{N} \setminus \mathcal{M}_{f_{a,b}}=\{n:n\equiv 2\ \rm{(mod\ 4)}\}.$$
\item[iv)] If $a$ is odd and $b$ is even, then
$$\mathbb{N} \setminus \mathcal{M}_{f_{a,b}}=2\mathbb{N}.$$
\end{itemize}
\end{teor}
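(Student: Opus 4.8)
The plan is to reduce the study of $n \in \mathcal{M}_{f_{a,b}}$ to the pointwise characterization already obtained in Theorem~\ref{TI}, applied with $k = f_{a,b}(n) = an+b$. The key observation is that whether $n \in \mathcal{M}_{f_{a,b}}$ depends only on the residue class of $n$ modulo $4$ (to determine which branch of Theorem~\ref{TI} applies) together with, for each odd prime $p \mid n$, whether $p-1 \mid an+b$. So I would split into the four parity cases for $(a,b)$ and, within each, analyze the three branches of Theorem~\ref{TI}.

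First I would dispose of the easy congruence-mod-$4$ part. If $a$ is even and $b$ is odd, then $k = an+b$ is odd for every $n$; if moreover $n \equiv 2 \pmod 4$ then by Theorem~\ref{TI} (neither $n$ odd nor $4 \mid n$) we get $n \notin \mathcal{M}_f$, while if $4 \mid n$ then $k$ odd and we must also check $k>1$, which holds except possibly for small $n$ — here one checks $an+b \geq 2\cdot 4 + 1 > 1$, so all multiples of $4$ lie in $\mathcal{M}_f$; and if $n$ is odd we need $p-1 \nmid an+b$ for all $p \mid n$, but since $p \mid n$ we have (as in the definition of $\mathcal{P}_{a,b}$) that this can be analyzed via $\gcd$ conditions — in case (iii) one shows $\mathcal{P}_{a,b} = \emptyset$ so no odd $n$ is excluded, giving exactly $\mathbb{N}\setminus\mathcal{M}_f = \{n \equiv 2 \pmod 4\}$. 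The case $a$ odd, $b$ even is symmetric: $k$ is then even, so by the even branches of Theorem~\ref{TI} no even $n$ works, and again $\mathcal{P}_{a,b} = \emptyset$ forces every odd $n$ into $\mathcal{M}_f$, yielding $\mathbb{N}\setminus\mathcal{M}_f = 2\mathbb{N}$.

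The substantive cases are (i) and (ii), where $\mathcal{P}_{a,b}$ can be nonempty. Here the point is: for $n$ odd (case i: $k=an+b$ even; case ii: $n$ odd so we are in branch (i) of Theorem~\ref{TI}) we need to identify which odd $n$ have some prime $p \mid n$ with $p-1 \mid an+b$. Fix an odd prime $p$. The condition ``$p \mid n$ and $p-1 \mid an+b$'' is a condition on $n$ modulo $\mathrm{lcm}(p, p-1)$; writing $n = p\ell$, it becomes $p-1 \mid ap\ell + b$, i.e. $ap\,\ell \equiv -b \pmod{p-1}$, which is solvable for $\ell$ iff $\gcd(ap,p-1) \mid b$, i.e. iff $p \in \mathcal{P}_{a,b}$; and when solvable the set of valid $n = p\ell$ is an arithmetic progression. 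Unwinding the substitution $n = p\ell$, the modulus for $n$ becomes $\frac{1}{a}\mathrm{lcm}(ap,p-1)$ (one must check $a \mid \mathrm{lcm}(ap,p-1)$, which holds since $ap \mid \mathrm{lcm}(ap,p-1)$) and the least valid $n$ is exactly $\Xi(a,b,p) = a^{-1}\min\{x : x \equiv 0 \pmod{ap},\ x \equiv -b \pmod{p-1}\}$ — here $x$ plays the role of $an$. Taking the union over $p \in \mathcal{P}_{a,b}$ gives the displayed family; then one adds the mod-$4$ contribution ($2\mathbb{N}$ in case i, since $k$ even kills all even $n$; $\{n \equiv 2 \pmod 4\}$ in case ii, since for $4\mid n$ with $k$ odd and $k = an+b \geq a+b > 1$ we are fine) and checks these two pieces together exhaust $\mathbb{N}\setminus\mathcal{M}_f$.

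I expect the main obstacle to be bookkeeping rather than conceptual: correctly translating the simultaneous conditions ``$p \mid n$'' and ``$(p-1) \mid an+b$'' into a single arithmetic progression in $n$ with the right modulus and right starting point, and verifying the divisibility $a \mid \mathrm{lcm}(ap,p-1)$ that makes $\Xi$ and the step $\frac{1}{a}\mathrm{lcm}(ap,p-1)$ integers. A secondary point requiring care is the interaction at $n \equiv 2 \pmod 4$: such $n$ are even (so excluded whenever $k$ is even, i.e. in case (i)) and in case (ii) they are excluded by Theorem~\ref{TI} regardless of the prime-divisor condition, so one must make sure the union with the $\mathcal{P}_{a,b}$-family is stated so as not to double-count or miss the odd multiples of primes in $\mathcal{P}_{a,b}$ that happen also to be $\equiv 2 \pmod 4$ — but those are odd times a prime, hence the overlap is handled automatically. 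Finally one should note the edge case $k = an+b \leq 1$, which can only occur for the smallest $n$ and small $(a,b)$, and check it does not disturb the stated progressions.
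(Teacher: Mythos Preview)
Your overall strategy matches the paper's: reduce to Theorem~\ref{TI} with $k=an+b$ and split by the parity of $(a,b)$. The paper only writes out case~(i) in detail and declares the others analogous; your description of (i) and (ii), rewriting the simultaneous conditions $p\mid n$ and $p-1\mid an+b$ as the system $an\equiv 0\pmod{ap}$, $an\equiv -b\pmod{p-1}$ and reading off $\Xi(a,b,p)$ and the step $\tfrac{1}{a}\,\textrm{lcm}(ap,p-1)$, is exactly what the paper does.

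There is, however, a genuine slip in your treatment of case~(iv). You assert that when $a$ is odd and $b$ is even one has $\mathcal{P}_{a,b}=\emptyset$, and use this to deduce that every odd $n$ lies in $\mathcal{M}_{f_{a,b}}$. The claim is false: since $\gcd(ap,p-1)=\gcd(a,p-1)$ divides the odd number $a$, it is itself odd, and many odd primes $p$ will satisfy $\gcd(a,p-1)\mid b$ (for instance if $a=1$ then every odd prime lies in $\mathcal{P}_{1,b}$). The correct argument is the direct parity one you already use elsewhere: for $n$ odd we have $an$ odd and $b$ even, so $k=an+b$ is odd, while $p-1$ is even for every odd prime $p\mid n$; hence $p-1\nmid k$ automatically and Theorem~\ref{TI}(i) places $n$ in $\mathcal{M}_{f_{a,b}}$. (Equivalently: the arithmetic progressions attached to primes $p\in\mathcal{P}_{a,b}$ do exist in case~(iv), but one checks they consist entirely of even integers and are therefore already absorbed into $2\mathbb{N}$.) By contrast, your argument for case~(iii) via $\mathcal{P}_{a,b}=\emptyset$ is fine, because there $\gcd(a,p-1)$ is even while $b$ is odd.
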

\begin{proof}
We will give a complete proof of i), the other cases being analogous.

First observe that $a$ and $b$ being even, then $f_{a,b}(n)$ is even. Consequently, by Theorem \ref{TI}, we have that $2\mathbb{N}\subseteq\mathbb{N}\setminus\mathcal{M}_{f_{a,b}}$.

Now, assume that $n\not\in\mathcal{M}_{f_{a,b}}$ is odd. Then, by Theorem \ref{TI} again, there must exist an odd prime $p \mid n$ such that $p-1\mid an+b$. Since $an\equiv 0$ (mod $ap$) and $an\equiv -b$ (mod $p-1$) it readily follows that $an\in\{A+(s)\textrm{lcm}(ap,p-1):s\in\mathbb{N}\}$ with $A=\min \{  x\in \mathbb{N} : x \equiv 0\textrm{ (mod }ap), x \equiv -b\textrm{ (mod }(p-1))\}$. Since it is obvious that $A$ is a multiple of $ap$ we have that $n\in\{\frac{A}{a}+\frac{(s)}{a}\textrm{lcm}(ap,p-1):s\in\mathbb{N}\}$ with $\frac{A}{a}=\Xi(a,b,p)$ by definition. To finish the proof it is enough to observe that if $ap\mid an$ and $p-1\mid an+b$, then $p\in\mathcal{P}_{a,b}$ as claimed.
\end{proof}

Here and throughout, we denote by $\delta(A)$ (resp. $\underline{\delta}(A)$, $ \overline{\delta}(A)$) the asymptotic (resp. upper, lower asymptotic) density of an
integer sequence $A$. We will be interested in computing the asymptotic density of the sets $\mathcal{M}_{f_{a,b}}$, at least for some particular values of $a$ and $b$. To do so we must first show that this density exists and the following lemma will be our main tool.

\begin{lem}
Let $\mathcal{A}:=\{a_k\}_{k\in\mathbb{N}}$ and $\{c_k\}_{k\in\mathbb{N}}$ be two sequences of positive integers and $\mathcal{B}_{k}:=\{a_k+ (s-1)c_k: s \in \mathbb{N} \}$. If $\displaystyle{\sum_{k=1}^{\infty}\frac{1}{c_{k}}}$ is convergent and $\mathcal{A}$ has zero asymptotic density, then $\displaystyle{\bigcup_{k=1}^{\infty}\mathcal{B}_{k}}$ has an asymptotic density with:
$$\delta(\bigcup_{k=1}^{\infty}\mathcal{B}_{k})=\lim_{n\rightarrow\infty}\delta(\bigcup_{k=1}^{n}\mathcal{B}_{k})$$
and
$$ \delta(\bigcup_{k=1}^{\infty}\mathcal{B}_{k})-\delta(\bigcup_{k=1}^{n}\mathcal{B}_{k})\leq \sum_{i=n+1}^\infty \frac{1}{c_i}.$$
\end{lem}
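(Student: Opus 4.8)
The plan is to reduce the statement about $\bigcup_{k=1}^\infty \mathcal{B}_k$ to a tail estimate using inclusion–exclusion against the finite truncations $\bigcup_{k=1}^n \mathcal{B}_k$. First I would observe that each $\mathcal{B}_k$ is an arithmetic progression with common difference $c_k$, hence has asymptotic density exactly $1/c_k$; a finite union $\bigcup_{k=1}^n \mathcal{B}_k$ is then a union of finitely many arithmetic progressions and therefore has a well-defined asymptotic density $\delta_n := \delta(\bigcup_{k=1}^n \mathcal{B}_k)$. Since the truncations are nested, $\delta_n$ is non-decreasing in $n$, and it is bounded above by $1$ (or more sharply by $\sum_{k=1}^\infty 1/c_k$), so $\lim_{n\to\infty}\delta_n$ exists; call it $L$.

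Next I would control the "extra" set $\bigcup_{k>n}\mathcal{B}_k$. The key subadditivity bound is that for any $N$,
$$
\#\Bigl(\bigcup_{k=n+1}^\infty \mathcal{B}_k \cap [1,N]\Bigr) \le \sum_{k=n+1}^\infty \#\bigl(\mathcal{B}_k\cap[1,N]\bigr) \le \sum_{k=n+1}^\infty\Bigl(\frac{N}{c_k}+1\Bigr).
$$
This naive bound has a divergent $\sum 1$ term, so I cannot stop here; I need to split the tail sum at an index depending on $N$. Choose $m=m(N)\to\infty$ slowly enough (e.g. $m(N)$ the largest index with $c_{m}\le \sqrt N$, or simply any sequence with $m(N)\to\infty$ and $m(N)=o(N)$ using that the $a_k$ and $c_k$ tail behaviour is benign). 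For indices $n< k\le m$ use $\#(\mathcal{B}_k\cap[1,N])\le N/c_k+1$, contributing at most $N\sum_{k>n}1/c_k + m$; for indices $k>m$, note that the smallest element of $\mathcal{B}_k$ is $a_k$, so $\mathcal{B}_k\cap[1,N]=\emptyset$ unless $a_k\le N$, i.e. $\bigcup_{k>m}\mathcal{B}_k\cap[1,N]\subseteq \{a_k : a_k\le N\}\cup\bigcup_{k>m}(\mathcal{B}_k\setminus\{a_k\})$, and $\mathcal{B}_k\setminus\{a_k\}$ has elements $\ge a_k+c_k$. The hypothesis $\delta(\mathcal{A})=0$ handles the set $\{a_k\}$: its counting function in $[1,N]$ is $o(N)$. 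Dividing by $N$ and letting $N\to\infty$ then $n\to\infty$ yields
$$
\overline{\delta}\Bigl(\bigcup_{k=n+1}^\infty\mathcal{B}_k\Bigr)\le \sum_{k=n+1}^\infty\frac1{c_k}.
$$

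Finally I would combine the two halves. Writing $\bigcup_{k=1}^\infty\mathcal{B}_k = \bigl(\bigcup_{k=1}^n\mathcal{B}_k\bigr)\cup\bigl(\bigcup_{k>n}\mathcal{B}_k\bigr)$, we get for the upper density
$$
\overline{\delta}\Bigl(\bigcup_{k=1}^\infty\mathcal{B}_k\Bigr)\le \delta_n + \sum_{k>n}\frac1{c_k},
$$
and for the lower density $\underline{\delta}(\bigcup_{k=1}^\infty\mathcal{B}_k)\ge \delta_n$ trivially since $\bigcup_{k=1}^n\mathcal{B}_k$ is a subset with density $\delta_n$. Letting $n\to\infty$ forces $\overline{\delta}=\underline{\delta}=L$, so the density exists and equals $\lim_n\delta_n$; moreover the displayed inequality with $n$ fixed gives $\delta(\bigcup_k\mathcal{B}_k)-\delta_n\le \sum_{k>n}1/c_k$, which is the second claim.

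The main obstacle is the tail estimate in the second paragraph: the naive union bound loses a divergent $\sum 1$ from the "$+1$" in each $\#(\mathcal{B}_k\cap[1,N])\le N/c_k+1$, so the argument genuinely needs the truncation index $m(N)\to\infty$ together with the zero-density hypothesis on $\mathcal{A}$ to absorb the contribution of the smallest terms of the far-tail progressions. Everything else — existence and monotonicity of $\delta_n$, density of a single progression, finite inclusion–exclusion — is routine.
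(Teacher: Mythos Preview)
Your overall architecture matches the paper's exactly: show the finite truncations have density, obtain a tail bound $\overline{\delta}\bigl(\bigcup_{k>n}\mathcal{B}_k\bigr)\le \sum_{k>n}1/c_k$, then sandwich. The difference is that your second paragraph is more complicated than necessary and, as written, not quite closed off. After splitting at $m(N)$ you decompose the far tail as $\{a_k:a_k\le N\}\cup\bigcup_{k>m}(\mathcal{B}_k\setminus\{a_k\})$ and then bound only the first piece via $\delta(\mathcal{A})=0$; the remark ``$\mathcal{B}_k\setminus\{a_k\}$ has elements $\ge a_k+c_k$'' does not by itself kill the second piece unless you also arrange $c_k>N$ for $k>m(N)$, which your suggested choices of $m(N)$ do not guarantee (the $c_k$ need not be monotone). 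The fix is easy --- just bound $\#\bigl((\mathcal{B}_k\setminus\{a_k\})\cap[1,N]\bigr)\le N/c_k$ for those $k$ too --- but once you do that, the whole $m(N)$ device becomes superfluous.

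The paper skips the truncation index altogether with the one-line estimate
\[
\#\Bigl(\bigcup_{k>n}\mathcal{B}_k\cap[0,N]\Bigr)\ \le\ \#\bigl(\mathcal{A}\cap[0,N]\bigr)\ +\ N\sum_{k>n}\frac{1}{c_k},
\]
obtained by stripping off the initial term $a_k$ from each $\mathcal{B}_k$ (contributing at most $\#(\mathcal{A}\cap[0,N])$ in total) and bounding the remaining terms of $\mathcal{B}_k$ in $[0,N]$ by $N/c_k$. Dividing by $N$ and using $\delta(\mathcal{A})=0$ gives the tail bound directly. This is exactly the idea that resolves your ``divergent $\sum 1$'' worry: the offending $+1$'s are precisely the $a_k$'s, and the zero-density hypothesis is there to absorb them all at once.
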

\begin{proof}
Let us denote $B_{n}:=\bigcup_{k=n+1}^{\infty}\mathcal{B}_{k}$ and $\vartheta(n,N):=\textrm{card}([0,N]\cap B_n)$. Then:
$$\vartheta(n,N) \leq \textrm{card}([0,N]\cap \mathcal{A})+ N \sum_{k=n+1}^{\infty}\frac{1}{c_{k}}.$$
From this, we get:
$$\bar{\delta}(B_{n})=\lim\sup\frac{\vartheta(n,N)}{N}\leq \lim \sup\frac{ \textrm{card}([0,N]\cap \mathcal{A})}{N} + \sum_{k=n+1}^{\infty}\frac{1}{c_{k}}= \sum_{k=n+1}^{\infty}\frac{1}{c_{k}}.$$

Now, for every $n$, $\displaystyle{\bigcup_{k=1}^{n}\mathcal{B}_{k}}$
has an asymptotic density and the sequence $\delta_{n}:=\delta\left(\displaystyle{\bigcup
_{k=1}^{n}\mathcal{B}_{k}}\right)$ is non-decreasing and bounded (by 1), thus convergent. Consequently:

\begin{align*}
\delta\left(\bigcup_{k=1}^{n}\mathcal{B}_{k}\right) & \leq \underline{\delta} \left(\bigcup
_{k=1}^{\infty}\mathcal{B}_{k}\right)\leq \overline{\delta} \left(\bigcup_{k=1}^{\infty}
\mathcal{B}_{k}\right)=\overline{\delta}\left(\bigcup_{k=1}^{n}\mathcal{B}_{k}\cup B_{n}\right)\\
& \leq \delta \left(\bigcup_{k=1}^{n}\mathcal{B}_{k}\right)+\bar{\delta}(B_{n})\leq
\delta\left(\bigcup_{k=1}^{n}\mathcal{B}_{k}\right)+\sum_{k=n+1}^{\infty}\frac{1}{c_{k}},
\end{align*}

and considering that $\displaystyle{\sum_{k=n+1}^{\infty}\frac{1}{c_{j}}}$ converges to zero
it is enough to take limits in order to finish the proof.
\end{proof}

With the help of this lemma the following proposition is easy to prove.
\begin{prop}
For every $a,b \in \mathbb{N}$, $\mathcal{M}_{f_{a,b}}$ has an asymptotic density and it is a computable number.
\end{prop}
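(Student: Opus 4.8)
The plan is to reduce the existence and computability of $\delta(\mathcal{M}_{f_{a,b}})$ to a direct application of the previous lemma, using the explicit description of $\mathbb{N}\setminus\mathcal{M}_{f_{a,b}}$ from Theorem 3. I would first dispose of the two trivial cases: if $a$ is even and $b$ is odd, then $\mathbb{N}\setminus\mathcal{M}_{f_{a,b}}=\{n:n\equiv 2\ (\mathrm{mod}\ 4)\}$, which has density $\frac14$, so $\delta(\mathcal{M}_{f_{a,b}})=\frac34$; and if $a$ is odd and $b$ is even, then $\mathbb{N}\setminus\mathcal{M}_{f_{a,b}}=2\mathbb{N}$ has density $\frac12$, so $\delta(\mathcal{M}_{f_{a,b}})=\frac12$. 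Both are obviously computable.

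For the remaining two cases ($a,b$ both even, or $a,b$ both odd) the complement is a fixed arithmetic progression $\mathcal{D}$ (namely $2\mathbb{N}$, resp. $\{n\equiv 2\ (\mathrm{mod}\ 4)\}$) together with a countable union $\bigcup_{p\in\mathcal{P}_{a,b}}\mathcal{C}_p$ of arithmetic progressions, where $\mathcal{C}_p=\{\Xi(a,b,p)+\frac{s}{a}\mathrm{lcm}(ap,p-1):s\in\mathbb{N}\}$ has common difference $c_p:=\frac1a\mathrm{lcm}(ap,p-1)$. The key quantitative observation is that $c_p=\frac{\mathrm{lcm}(ap,p-1)}{a}\geq\frac{p(p-1)}{\gcd(ap,p-1)\cdot 1}\cdot\frac{1}{1}$; more simply, since $\mathrm{lcm}(ap,p-1)\geq \mathrm{lcm}(p,p-1)=p(p-1)$ (as $\gcd(p,p-1)=1$), and $a$ is fixed, we get $c_p\geq \frac{p(p-1)}{a}$, so $\sum_{p\in\mathcal{P}_{a,b}}\frac{1}{c_p}\leq a\sum_{p}\frac{1}{p(p-1)}<\infty$, convergence being uniform over all $a$. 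Enumerating $\mathcal{P}_{a,b}=\{p_1<p_2<\cdots\}$ and taking $\mathcal{A}=\{\Xi(a,b,p_k)\}_k$ — which is a subset of a set of density zero, in fact finite in each bounded interval after noting $\Xi(a,b,p_k)$ grows at least linearly in $p_k$, hence has zero density — the previous lemma applies and yields that $\bigcup_k\mathcal{C}_{p_k}$ has an asymptotic density. Since $\mathcal{D}$ has a density and the union of two sets with densities has a density (using inclusion–exclusion, once we check $\mathcal{D}\cap\bigcup_k\mathcal{C}_{p_k}$ also has a density — which follows because intersecting every $\mathcal{C}_{p_k}$ with the fixed progression $\mathcal{D}$ either empties it or replaces it by an arithmetic progression with a comparable common difference, so the same lemma applies again), we conclude $\mathbb{N}\setminus\mathcal{M}_{f_{a,b}}$ has a density, hence so does $\mathcal{M}_{f_{a,b}}$.

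For computability, the point is the effective error bound in the lemma: $\delta(\bigcup_{k=1}^{\infty}\mathcal{C}_{p_k})-\delta(\bigcup_{k=1}^{n}\mathcal{C}_{p_k})\leq\sum_{i>n}\frac{1}{c_{p_i}}\leq a\sum_{i>n}\frac{1}{p_i(p_i-1)}$, and the tail on the right is an explicitly computable, explicitly bounded quantity (e.g. $\leq a/p_n$). Thus to approximate $\delta(\mathcal{M}_{f_{a,b}})$ within any prescribed $\varepsilon$ one computes $\delta(\mathcal{D}\cup\bigcup_{k=1}^{n}\mathcal{C}_{p_k})$ — a finite union of arithmetic progressions, whose density is a rational number computable by elementary inclusion–exclusion over least common multiples of the moduli — for $n$ large enough that the tail bound is below $\varepsilon$. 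This exhibits $\delta(\mathcal{M}_{f_{a,b}})$ as the limit of an explicitly computable sequence of rationals with an explicitly computable rate of convergence, which is precisely what "computable number" means.

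The main obstacle I anticipate is the bookkeeping around the intersection of the fixed progression $\mathcal{D}$ with the infinite union, i.e. verifying carefully that $(\mathbb{N}\setminus\mathcal{M}_{f_{a,b}})$ genuinely has a density rather than just the union $\bigcup_k\mathcal{C}_{p_k}$; one must argue that adding the single progression $\mathcal{D}$ does not destroy the density, which is handled by writing $\delta(\mathcal{D}\cup U)=\delta(\mathcal{D})+\delta(U)-\delta(\mathcal{D}\cap U)$ and checking each term exists — the last via a second invocation of the lemma with the $\mathcal{C}_{p_k}\cap\mathcal{D}$. A minor secondary point is confirming that the common differences $c_{p_k}$, after intersecting with $\mathcal{D}$, still satisfy $\sum 1/c_{p_k}<\infty$, which is immediate since intersecting an arithmetic progression with another only increases (or leaves unchanged, or empties) the common difference. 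None of this is deep; it is just careful application of the lemma twice together with the standard fact that a finite union of arithmetic progressions has a rational, effectively computable density.
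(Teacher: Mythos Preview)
Your proof is correct and follows essentially the same route as the paper: dispose of the two cases where the complement is a single arithmetic progression, and in the remaining two cases verify that the series $\sum_p a/\mathrm{lcm}(ap,p-1)$ converges and that the initial terms $\Xi(a,b,p)$ form a zero-density set, then invoke Lemma~2. Your anticipated ``main obstacle'' is a non-issue and the paper sidesteps it entirely: rather than separating out $\mathcal{D}$ and arguing about $\mathcal{D}\cap\bigcup_k\mathcal{C}_{p_k}$ via a second invocation of the lemma, simply include $\mathcal{D}$ as one more arithmetic progression in the countable union --- adding a single term to $\sum 1/c_k$ and a single point to the set of initial terms preserves both hypotheses of Lemma~2, so one application suffices.
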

\begin{proof}
It is enough to see that $\mathbb{N} \setminus \mathcal{M}_{f_{a,b}}$ has an asymptotic density and that it is a computable number.

Cases ii) y iii) above are obvious. In cases i) and iv) it is enough to apply the previous lemma since $\mathbb{N} \setminus \mathcal{M}_{f_{a,b}}$ is a countable union of arithmetic sequences whose initial terms ($\Xi(a,b,p)$) form a set of zero asymptotic density, and the series $\displaystyle{\sum_{p\textrm{ prime}}\frac{a}{\textrm{lcm}(ap,p-1)}}$ is convergent.
\end{proof}

The rest of this section will be devoted to study $\delta(\mathcal{M}_{f_{_{a,b}}})$ in some particular cases. Namely, the cases $(a,b)=(1,b)$. When $b$ is even, $\mathcal{M}_{f_{_{1,b}}}$ is exactly the set of odd integers and its asymptotic density is $\frac{1}{2}$. The case when $b$ is odd is much more interesting. In particular we will see that, in this case, the asymptotic density of $\mathcal{M}_{f_{1,b}}$  is slightly  greater than $\frac{1}{2}$.

In the following lemma we give a more explicit description of the elements of $\mathbb{N}\setminus\mathcal{M}_{f_{1,b}}$ with odd b. This description will be useful to compute $\delta(\mathcal{M}_{f_{{1,b}}})$.

\begin{lem}
Let $n$ be an integer and let $b\in\mathbb{Z}$ be odd. Then $n\in \mathbb{N} \setminus \mathcal{M}_{f_{1,b}}$ if and only if $n\equiv 2$ (mod 4) or $n$ is odd and it is of the form $kp^2-kp-bp$ for some $p$ odd prime and $\frac{b}{p-1}<k\in\mathbb{Z}$. In other words, if $\mathcal{G}_p^b:=\mathbb{N} \cap \{-bp\ \textrm{(mod $p(p-1)$)}\}$, we have that:
$$\mathbb{N}\setminus \mathcal{M}_{f_{_{1,b}}}=\bigcup_{p\geq3} \mathcal{G}_p^b \cup \{(2\ \textrm{(mod 4)}\}.$$
\end{lem}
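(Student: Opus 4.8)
The statement is essentially a specialization of Theorem~2, case~ii) (with $a=1$, $b$ odd), made explicit. The plan is to start from the general description of $\mathbb{N}\setminus\mathcal{M}_{f_{a,b}}$ given there and simplify it when $a=1$. With $a=1$ odd and $b$ odd, Theorem~2~ii) gives
$$\mathbb{N}\setminus\mathcal{M}_{f_{1,b}}=\{n:n\equiv 2\ (\mathrm{mod}\ 4)\}\cup\bigcup_{p\in\mathcal{P}_{1,b}}\{\Xi(1,b,p)+s\,\mathrm{lcm}(p,p-1):s\in\mathbb{N}\}.$$
Since $\gcd(p,p-1)=1$, we have $\mathrm{lcm}(p,p-1)=p(p-1)$, and the condition $b\equiv 0\ (\mathrm{mod}\ \gcd(p,p-1))$ defining $\mathcal{P}_{1,b}$ is vacuous, so $\mathcal{P}_{1,b}$ is simply the set of all odd primes. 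Thus the first task is to identify, for each odd prime $p$, the arithmetic progression $\{\Xi(1,b,p)+sp(p-1):s\in\mathbb{N}\}$ with the set $\mathcal{G}_p^b=\mathbb{N}\cap\{n:n\equiv -bp\ (\mathrm{mod}\ p(p-1))\}$.

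The key computation is to check that $\Xi(1,b,p)$, defined as the least $x\in\mathbb{N}$ with $x\equiv 0\ (\mathrm{mod}\ p)$ and $x\equiv -b\ (\mathrm{mod}\ p-1)$, is congruent to $-bp$ modulo $p(p-1)$. This is a CRT identity: $x=-bp$ satisfies $x\equiv 0\ (\mathrm{mod}\ p)$ trivially, and $-bp\equiv -b(p-1)-b\equiv -b\ (\mathrm{mod}\ p-1)$, so $-bp$ is one solution of the congruence system; since $p$ and $p-1$ are coprime, the solution set is exactly the residue class of $-bp$ modulo $p(p-1)$, and $\Xi(1,b,p)$ is its least positive representative. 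Hence $\{\Xi(1,b,p)+sp(p-1):s\in\mathbb{N}\}=\mathcal{G}_p^b$, which establishes the displayed union formula $\mathbb{N}\setminus\mathcal{M}_{f_{1,b}}=\bigcup_{p\geq 3}\mathcal{G}_p^b\cup\{n:n\equiv 2\ (\mathrm{mod}\ 4)\}$. (One should double-check the edge behavior: if $-bp$ happens to be positive and minimal it is $\Xi$ itself, otherwise $\Xi$ is $-bp$ plus a suitable multiple of $p(p-1)$; either way the resulting progression is $\mathcal{G}_p^b$.)

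It remains to reconcile this with the "$kp^2-kp-bp$" description and to pin down the range of $k$. Writing a general element of $\mathcal{G}_p^b$ as $-bp+kp(p-1)=kp^2-kp-bp$ for $k\in\mathbb{Z}$, the constraint is exactly that this quantity be a positive integer, i.e. $kp(p-1)>bp$, i.e. $k>\frac{b}{p-1}$; this recovers the stated bound $\frac{b}{p-1}<k\in\mathbb{Z}$. I would also note, to match the earlier characterization (Theorem~1), that for odd $n$ of this form we indeed have $p\mid n$ and $p-1\mid n+b$ (the latter because $n+b\equiv -bp+b=-b(p-1)\equiv 0\ (\mathrm{mod}\ p-1)$), confirming that such $n$ lie outside $\mathcal{M}_{f_{1,b}}$ for the right reason, and conversely that any odd $n\notin\mathcal{M}_{f_{1,b}}$ has some odd prime divisor $p$ with $p-1\mid n+b$, forcing $n\in\mathcal{G}_p^b$. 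The only mild subtlety — and the step most worth stating carefully — is the translation between the implicit CRT description of $\Xi$ and the explicit residue $-bp\pmod{p(p-1)}$; everything else is bookkeeping. For safety one might add the observation that $b$ odd is needed only to place the even part of $\mathbb{N}\setminus\mathcal{M}_{f_{1,b}}$ in the class $2\ (\mathrm{mod}\ 4)$ rather than in $2\mathbb{N}$, consistent with case~ii) versus case~iv) of Theorem~2.
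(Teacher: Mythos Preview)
Your proposal is correct. Your primary route differs from the paper's: you specialize Theorem~2 (the general affine result) to $(a,b)=(1,b)$ and then use the CRT identity $-bp\equiv 0\pmod{p}$, $-bp\equiv -b\pmod{p-1}$ to identify $\Xi(1,b,p)$ with the residue class of $-bp$ modulo $p(p-1)$. The paper instead bypasses Theorem~2 and argues directly from Theorem~1: if $n$ is odd and $n\notin\mathcal{M}_{f_{1,b}}$, pick a prime $p\mid n$ with $p-1\mid n+b$, write $n=pm$, expand $n+b=(p-1)m+(m+b)$ to get $p-1\mid m+b$, hence $m=k(p-1)-b$ and $n=kp^2-kp-bp$. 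Your final paragraph (the sanity check via Theorem~1) is essentially this same computation, so you end up covering the paper's argument as well. The trade-off is that your approach packages the bookkeeping into the already-proved Theorem~2 plus one clean CRT observation, while the paper's approach is shorter and self-contained, needing only Theorem~1.
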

\begin{proof}
Let $n\in \mathbb{N} \setminus \mathcal{M}_{f_{1,b}}$. Then, by Theorem \ref{TI}, $n\equiv 2$ (mod 4) or it is odd and there exists $p$ prime divisor of $n$ such that $p-1$ divides $n+b$. Put $n=pm$, then $n+b=pm+b=(p-1)m+m-b$ so $p-1$ must divide $m-b$ and $m=k(p-1)+b$ for some $k$ and $n=pm=kp^2-kp-bp$ as claimed.

The converse is obvious due to Theorem \ref{TI} again.
\end{proof}

In order to compute $\delta(\mathcal{M}_{f_{_{1,b}}})=1-\delta(\mathbb{N} \setminus \mathcal{M}_{f_{1,b}})$  with the help of lemmata 2 and 3 and of the Principle of Inclusion and Exclusion it will be necessary to have a good criterion to determine when the intersection of $\mathcal{ G}_p^b$ for various odd primes $p$ is empty. Put
$$
\mathcal{R}_b:=\{m>2:\gcd (m,\phi(m))\textrm{ divides }b\}.
$$

Thus, we have the following result.

\begin{prop}
Let ${\mathcal P}$ be a finite set of primes and put $m:=\prod_{p\in {\mathcal P}} p$. Then $\bigcap_{p\in {\mathcal P}} {\mathcal G}_p^b$ is nonempty if and only if
$m \in \mathcal{R}_b$, where this set is defined above. If this is the case, then the set $\bigcap_{p\in {\mathcal P}} {\mathcal G}_p^b$ is an arithmetic progression of difference $\textrm{lcm}(m,\lambda(m))$.
\end{prop}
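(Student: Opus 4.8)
The plan is to reduce the statement to a question about the simultaneous solvability of a system of congruences and then apply the Chinese Remainder Theorem. Recall that for each odd prime $p$ we have $\mathcal{G}_p^b=\mathbb{N}\cap\{-bp\pmod{p(p-1)}\}$, so $n\in\mathcal{G}_p^b$ if and only if $n\equiv -bp\pmod{p(p-1)}$. Hence $\bigcap_{p\in\mathcal{P}}\mathcal{G}_p^b\neq\emptyset$ if and only if the system $\{n\equiv -bp\pmod{p(p-1)}:p\in\mathcal{P}\}$ has a solution in $\mathbb{N}$. First I would split each modulus as $p(p-1)$ and observe that $n\equiv -bp\pmod{p(p-1)}$ is equivalent to the pair $n\equiv 0\pmod p$ and $n\equiv -bp\equiv b\pmod{p-1}$ (since $-bp=-b(p-1)-b\equiv -b$... wait, $-bp\equiv b\pmod{p-1}$ because $p\equiv 1$); I would record this reformulation carefully. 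Thus the whole system is: $n\equiv 0\pmod p$ for all $p\in\mathcal{P}$, together with $n\equiv b\pmod{p-1}$ for all $p\in\mathcal{P}$. The first family amalgamates to $n\equiv 0\pmod m$ where $m=\prod_{p\in\mathcal{P}}p$.

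The second step is to analyze when the combined system $\{n\equiv 0\pmod m\}\cup\{n\equiv b\pmod{p-1}:p\in\mathcal{P}\}$ is consistent. By CRT, consistency is equivalent to pairwise consistency of the congruences after reducing to prime-power moduli; a congruence $n\equiv 0\pmod m$ and $n\equiv b\pmod{p-1}$ are jointly solvable iff $b\equiv 0\pmod{\gcd(m,p-1)}$, and two congruences $n\equiv b\pmod{p-1}$, $n\equiv b\pmod{q-1}$ are automatically consistent (same residue $b$). So the only obstruction is $\gcd(m,p-1)\mid b$ for every $p\in\mathcal{P}$, and since each $p-1$ divides $\lambda(m)$ (Carmichael function) and more relevantly $p\mid m$ while $p-1$ contributes to $\phi(m)$, I would show $\bigcap_{p\in\mathcal{P}}\gcd(m,p-1)$ combines exactly into the condition $\gcd(m,\phi(m))\mid b$. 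Concretely: $\mathrm{lcm}_{p\in\mathcal{P}}(p-1)$ divides $\phi(m)$ (in fact equals $\lambda$ of the squarefree part appropriately), and the full system over the modulus $\mathrm{lcm}(m,\lambda(m))$ is consistent iff for each prime power $\ell^a\mid\gcd(m,\phi(m))$ the residue $b$ is $0$ mod the relevant power; assembling these gives precisely $\gcd(m,\phi(m))\mid b$, i.e. $m\in\mathcal{R}_b$.

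Finally, when the system is consistent, CRT tells us the solution set is a single residue class modulo the lcm of all the moduli involved, namely $\mathrm{lcm}\big(m,\,\mathrm{lcm}_{p\in\mathcal{P}}(p-1)\big)=\mathrm{lcm}(m,\lambda(m))$ — here I would verify that $\mathrm{lcm}_{p\in\mathcal{P}}(p-1)=\lambda(m)$ using that $m$ is squarefree, so $\lambda(m)=\mathrm{lcm}_{p\mid m}(p-1)$ by definition of the Carmichael function. Intersecting with $\mathbb{N}$ leaves an arithmetic progression of common difference $\mathrm{lcm}(m,\lambda(m))$, as asserted. The main obstacle I anticipate is the bookkeeping in step two: showing that the conjunction of the pairwise gcd-conditions $\gcd(m,p-1)\mid b$ over $p\in\mathcal{P}$ collapses exactly to the single clean condition $\gcd(m,\phi(m))\mid b$ (rather than something involving $\lambda$ or a proper divisor of $\phi(m)$) requires a careful prime-power-by-prime-power comparison of the exact power of each prime $\ell$ dividing $m$, dividing some $p-1$, and dividing $\phi(m)$; this is where one must be attentive that $m$ is squarefree so that the $\ell$-part of $m$ is just $\ell$, while the $\ell$-part of $\phi(m)$ can be larger, and confirm the two notions of gcd agree.
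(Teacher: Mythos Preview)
Your overall strategy---translate membership in $\mathcal{G}_p^b$ into a pair of congruences and invoke CRT---is exactly the paper's. Two remarks.

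First, a sign slip: from $n\equiv -bp\pmod{p-1}$ and $p\equiv 1$ you get $n\equiv -b\pmod{p-1}$, not $n\equiv b$. (Your own parenthetical computation $-bp=-b(p-1)-b$ gives the right answer before you overwrite it.) This does not affect the divisibility condition on $b$, so the argument survives, but you should fix it.

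Second, and more substantively: the paper organizes the CRT step more efficiently and thereby avoids the ``bookkeeping obstacle'' you anticipate. Instead of keeping the congruences $n\equiv -b\pmod{p-1}$ separate and then analysing the pairwise conditions $\gcd(m,p-1)\mid b$ one by one, the paper first collapses them into the single congruence $n\equiv -b\pmod{\lambda(m)}$, using that for squarefree $m$ one has $\lambda(m)=\mathrm{lcm}_{p\mid m}(p-1)$. Then only two congruences remain, $n\equiv 0\pmod m$ and $n\equiv -b\pmod{\lambda(m)}$, and CRT gives solvability iff $\gcd(m,\lambda(m))\mid b$ in one stroke. The passage to $\gcd(m,\phi(m))$ is then a one-line observation: since $m$ is squarefree both gcd's are squarefree, and a prime $\ell$ divides either one iff $\ell\mid m$ and $\ell\mid p-1$ for some $p\mid m$. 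Your route via $\mathrm{lcm}_p\gcd(m,p-1)$ does lead to the same place, but the identity $\mathrm{lcm}_p\gcd(m,p-1)=\gcd(m,\lambda(m))$ is exactly the computation the paper's reorganization makes unnecessary.
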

\begin{proof}
It is clear that $\bigcap_{p\in\mathcal{P}}\mathcal{G}_b^b$ is nonempty if and only if there exists $n$ such that $n\equiv -b$ (mod $p-1$) and $n\equiv 0$ (mod $p$) for every $p\in\mathcal{P}$. This happens if and only if there exists $n$ such that $n\equiv -b$ (mod $\lambda(m)$) and $n\equiv 0$ (mod $m$) and this set of congruences have a solution if and only if $\gcd(m,\lambda(m))$ divides $b$. To finish the proof it is enough to observe that, $m$ being square-free, $\gcd(m,\lambda(m))=\gcd(m,\phi(m))$ and apply the Chinese Remainder Lemma.
\end{proof}

To compute the density of the set $\mathbb{N} \setminus \mathcal{M}_{f_{1,b}}$ we consider  $3=p_1<p_2<\cdots$  the increasing sequence of all the odd primes and $k:=k(\varepsilon)$  minimal such that
$$
\sum_{j\ge k} \frac{1}{p_j(p_j-1)}<\varepsilon.
$$
Thus, with an error of at most $\varepsilon$, the density of the set $\mathbb{N} \setminus \mathcal{M}_{f_{1,b}}$ is the same as the density of
$\bigcup_{j<k} {\mathcal G}_{p_j}^b: $
$$\delta  \left( \bigcup_{j<k} {\mathcal G}_{p_j}^b \right)< \delta(\mathbb{N} \setminus \mathcal{M}_{f_{1,b}}) <\delta  \left( \bigcup_{j<k} {\mathcal G}_{p_j}^b \right)+\varepsilon$$

and, by the Principle of Inclusion and Exclusion,
$$
\delta  \left( \bigcup_{j<k} {\mathcal G}_{p_j}^b \right)=\sum_{s\ge 1} \sum_{1\le i_1<i_2<\cdots<i_s\le k-1}\frac{\varepsilon_{i_1,i_2,\ldots,i_s}}{\textrm{lcm}[p_{i_1}(p_{i_1}-1),\ldots, p_{i_s}(p_{i_s}-1)]},
$$
with  the coefficient $\varepsilon_{i_1,i_2,\ldots,i_s}$ being zero if  $\bigcap_{t=1}^s {\mathcal G}_{p_{i_t}}^b=\emptyset$, and being $(-1)^{s-1}$ otherwise. In other terms, putting $\Pi_k:=\prod_{i=2}^k p_i$,
$$\delta  \left( \bigcup_{j<k} {\mathcal G}_{p_j}^b \right)=-\sum_{\substack{m \mid \Pi_k\\ m  \in \mathcal{R}_b}} \frac{(-1)^{\omega(m)}}{\textrm{lcm} ( m ,\lambda(m))}$$
where, $\omega(m)$ is the number of distinct prime factors of $m$.

In the case $b=\pm1$ the asymptotic density of  $\mathcal{M}_{f_{_{1,b}}}$is closely related to that of the set $\mathfrak{P}:=\{n \in \mathbb{N}\textrm{ odd } : S_{\frac{n-1}{2}} \equiv 0\textrm{ (mod n)} \}$ which was defined and studied in \cite{nos2}. In this previous work $\delta(\mathfrak{P})$ was computed up to 3 digits: 0.379... More specifically, it was seen that $\delta(\mathfrak{P}) \in [0.379005, 0.379826]$.

\begin{prop}
For  $b\in \{-1,1\}$ the following holds:
$$\delta(\mathcal{M}_{f_{_{1,b}}})=2 \delta(\mathfrak{P})-\frac{1}{4} \in [0.50801, 0.50966].$$
\end{prop}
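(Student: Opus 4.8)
The plan is to connect $\mathcal{M}_{f_{1,b}}$ for $b=\pm1$ with the set $\mathfrak{P}=\{n\textrm{ odd}:S_{(n-1)/2}(n)\equiv0\ \textrm{(mod }n)\}$ by exploiting the description of $\mathbb{N}\setminus\mathcal{M}_{f_{1,b}}$ from Lemma 3. First I would split $\mathcal{M}_{f_{1,b}}$ according to residue mod $4$: since $f_{1,b}(n)=n+b$ with $b$ odd, every $n\equiv2$ (mod $4$) lies outside $\mathcal{M}_{f_{1,b}}$, while every $n\equiv0$ (mod $4$) lies inside it (because $n+b$ is then odd and greater than $1$, so Theorem~\ref{TI}(ii) applies — one must note $n+b>1$ for $n\ge4$, which is immediate). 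So the multiples of $4$ contribute density exactly $\tfrac14$, and it remains to understand the odd part $\mathcal{M}_{f_{1,b}}\cap(2\mathbb{N}+1)$.

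For odd $n$, Theorem~\ref{TI}(i) says $n\in\mathcal{M}_{f_{1,b}}$ iff $p-1\nmid n+b$ for every prime $p\mid n$. The key observation is that for odd $n$ and $b=\pm1$ one has $n+b=n\pm1$ is even, and the condition ``$p-1\mid n\pm1$ for some $p\mid n$'' should be matched against the defining condition of $\mathfrak{P}$. Here I would invoke the characterization of $\mathfrak{P}$ established in \cite{nos2}: presumably an odd $n$ fails to be in $\mathfrak{P}$ exactly when $p-1\mid\frac{n-1}{2}$ for some $p\mid n$, equivalently $2(p-1)\mid n-1$, i.e. $p-1\mid\frac{n-1}{2}$. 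The two cases $b=1$ and $b=-1$ correspond to the conditions $p-1\mid n+1$ and $p-1\mid n-1$ respectively; since $p-1$ is even, $p-1\mid n-1 \iff \tfrac{p-1}{2}\mid\tfrac{n-1}{2}$ only under a parity refinement, so I expect the actual bookkeeping to show that the odd parts of $\mathcal{M}_{f_{1,1}}$ and $\mathcal{M}_{f_{1,-1}}$ are ``complementary halves'' that together reconstruct $\mathfrak{P}$ twice over, yielding the linear relation $\delta\big(\mathcal{M}_{f_{1,b}}\big)=2\delta(\mathfrak{P})-\tfrac14$. Concretely, I would write $\delta(\mathcal{M}_{f_{1,b}}\cap(2\mathbb{N}+1))=2\delta(\mathfrak{P})-\tfrac12$ and then add the $\tfrac14$ from the multiples of $4$.

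The cleanest route to the relation $\delta(\mathcal{M}_{f_{1,b}}\cap(2\mathbb{N}+1))=2\delta(\mathfrak{P})-\tfrac12$ is to compare the ``bad'' sets directly via the sieve formula already derived in the excerpt. From Lemma~3, $\mathbb{N}\setminus\mathcal{M}_{f_{1,b}}$ restricted to odd numbers is $\bigcup_{p\ge3}\mathcal{G}_p^b$ where $\mathcal{G}_p^b=\mathbb{N}\cap\{-bp\ \textrm{(mod }p(p-1))\}$, and Proposition~7 gives the inclusion–exclusion expansion $\delta\big(\bigcup_{p}\mathcal{G}_p^b\big)=-\sum_{m\mid\Pi,\,m\in\mathcal{R}_b}\frac{(-1)^{\omega(m)}}{\textrm{lcm}(m,\lambda(m))}$, where $\mathcal{R}_b=\{m>2:\gcd(m,\phi(m))\mid b\}$. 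For $b=\pm1$ the condition $\gcd(m,\phi(m))\mid b$ means $\gcd(m,\phi(m))=1$, which is the \emph{same} set $\mathcal{R}_1=\mathcal{R}_{-1}$ — these are exactly the ``cyclic numbers''. Hence $\delta(\mathbb{N}\setminus\mathcal{M}_{f_{1,1}})$ and $\delta(\mathbb{N}\setminus\mathcal{M}_{f_{1,-1}})$ are given by the \emph{identical} series, so in particular $\delta(\mathcal{M}_{f_{1,1}})=\delta(\mathcal{M}_{f_{1,-1}})$, and I just need to identify this common value with $2\delta(\mathfrak{P})-\tfrac14$ by checking that the analogous sieve series for $\mathfrak{P}$ computed in \cite{nos2} is (up to the trivial affine rescaling by the factor $2$ and the $\tfrac14$ shift) the same series. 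The numerical interval $[0.50801,0.50966]$ then follows by plugging $\delta(\mathfrak{P})\in[0.379005,0.379826]$ into $2\delta(\mathfrak{P})-\tfrac14$.

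The main obstacle I anticipate is the precise matching between the defining congruences of $\mathfrak{P}$ and those of $\mathcal{G}_p^{\pm1}$ — that is, verifying that ``$p-1\mid\frac{n-1}{2}$ for some odd prime $p\mid n$'' describes, after accounting for the factor-of-two rescaling $n\mapsto 2n\pm1$ type substitution, a union of arithmetic progressions whose inclusion–exclusion density is exactly half of $\delta(\mathbb{N}\setminus\mathcal{M}_{f_{1,b}})\cap(2\mathbb{N}+1)$ plus the contribution of $n\equiv2$ (mod $4$). This is essentially a reconciliation of two sieve computations, so the danger is purely in the bookkeeping of moduli (distinguishing $p-1$, $\frac{p-1}{2}$, $\lambda(m)$, $\frac{\lambda(m)}{2}$) rather than in any deep idea; once the progressions are aligned, Lemma~2 guarantees the densities exist and the Inclusion–Exclusion manipulation from Proposition~7 transfers verbatim.
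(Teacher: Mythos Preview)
Your proposal is correct and follows essentially the same route as the paper. The paper makes the comparison with $\mathfrak{P}$ concrete by recalling from \cite{nos2} that $\mathbb{I}\setminus\mathfrak{P}=\bigcup_{p\ge3}\mathcal{F}_p$ with $\mathcal{F}_p=\{p^2\pmod{2p(p-1)}\}$, and then observing directly that for every finite set of odd primes $\mathcal{P}$ one has $\delta\big(\bigcap_{p\in\mathcal{P}}\mathcal{G}_p^b\big)=2\,\delta\big(\bigcap_{p\in\mathcal{P}}\mathcal{F}_p\big)$ (the moduli $p(p-1)$ versus $2p(p-1)$ differ by a single factor of $2$, and the nonemptiness conditions coincide); inclusion--exclusion then yields $\delta\big(\bigcup_p\mathcal{G}_p^b\big)=2\,\delta(\mathbb{I}\setminus\mathfrak{P})$, from which the formula follows exactly as you wrote. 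Your observation that $\mathcal{R}_1=\mathcal{R}_{-1}$ (the cyclic numbers) is a clean way to see at once that the cases $b=1$ and $b=-1$ give identical densities; the paper does not state this explicitly but it is implicit in the same intersection identity. Your middle paragraph speculating about ``complementary halves'' and the condition $p-1\mid\frac{n-1}{2}$ is unnecessary and slightly off-target --- you can drop it entirely, since your third paragraph already contains the full argument.
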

\begin{proof} Let $\mathbb{I}$ denote the set of odd positive integers. For any odd prime $p$ let us define the following set:
$$\mathcal{F}_p := \{p^2\ \textrm{(mod $2p(p-1)$)}\},$$
and recall the definition:
$$\mathcal{G}_p^b:=\mathbb{N} \cap \{-bp\ \textrm{(mod $p(p-1)$)}\}.$$
In \cite{nos2} it was seen that:
$$\mathbb{I}\setminus \mathfrak{P} = \bigcup_{p\geq3} \mathcal{F}_p$$
and in the previous proposition we have just proved that:
 $$\mathbb{N}\setminus \mathcal{M}_{f_{_{1,b}}}=\bigcup_{p\geq3} \mathcal{G}_p^b \cup \{(2\ \textrm{(mod 4)}\}.$$
If $b=\pm 1$ and for every prime $p$ we have $\delta(\mathcal{G}_p^b)= 2 \delta(\mathcal{F}_p)$. Reasoning in a way similar to that in \cite{nos2}, we can see that for every odd $b$ and every set of primes $\mathcal{P}$ it holds:
$$2\delta \left(\bigcap_{p \in \mathcal{P}}\mathcal{F}_p\right) = \delta \left(\bigcap_{p \in \mathcal{P}}\mathcal{G}_p^b\right).$$
Consequently:
$$\delta (\mathbb{N}\setminus \mathcal{M}_{f_{_{1,b}}})= \frac{1}{4}+\delta\left(\bigcup_{p\geq3} \mathcal{G}_p^b\right)=\frac{1}{4}+ 2 \delta (\mathbb{I}\setminus \mathfrak{P}) =\frac{1}{4}+2\left(\frac{1}{2}- \delta (\mathfrak{P})\right)$$
and finally, since $\delta(\mathfrak{P})$ belongs to $[0.379005, 0.379826]$ we obtain that:
$$\delta ( \mathcal{M}_{f_{_{1,b}}})= 1- \delta (\mathbb{N}\setminus \mathcal{M}_{f_{_b}})=2 \delta (\mathfrak{P})-1/4 \in [0.50801, 0.50966].$$
\end{proof}

It is easy to observe that of $b$ is odd and $|b|>1$ then $\delta ( \mathcal{M}_{f_{_{1,b}}})>\delta ( \mathcal{M}_{f_{_{1,1}}})$. Moreover, if $b$ and $b'$ are odd with $|b|\neq |b'|$ and $b$ divides $b'$ then $\delta ( \mathcal{M}_{f_{_{1,b'}}})>\delta ( \mathcal{M}_{f_{_{1,b}}})$. In addition it is also easy to observe that the supremum of the densities  $\delta ( \mathcal{M}_{f_{_{1,b}}})$ is:

$$\mathfrak{S}:=\lim_{k\rightarrow \infty} \sum_{m \mid \Pi_k} \frac{(-1)^{\omega(m)}}{\textrm{lcm} ( m \lambda(m))}-\frac14.$$

since this is a decreasing sequence any value of $k$ will provide an upper bound for $\mathfrak{S}$. Computing the value for $k=22$,  we can say that for every odd $b\neq\pm1$

$$ 0.50801<\delta ( \mathcal{M}_{f_{_{1,1}}})<\delta ( \mathcal{M}_{f_{_{1,b}}})<\mathfrak{S}<0.647.$$

We will say that a positive integer $n$ is an \emph{anti-Korselt number} if for every $p$ prime divisor of $n$, $p-1$ does not divide $(n-1)$. This section will be closed computing the asymptotic  density of anti-Korselt numbers. In order to do this we observe that Theorem \ref{TI} gives the following characterization.

\begin{lem}
An integer $n$ is an anti-Korselt number if and only if $\displaystyle{\sum_{j=1}^{n} j^{n-1} \equiv 0\textrm{ (mod n)}}$ and $4\nmid n$.
\end{lem}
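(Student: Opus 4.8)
The plan is to reduce this to a direct application of Theorem~\ref{TI} with the exponent specialized to $k=n-1$. By definition, $n$ is anti-Korselt if and only if for every prime $p\mid n$, $p-1\nmid n-1$. I want to show this condition is equivalent to the conjunction ``$S_{n-1}(n)\equiv 0\pmod n$ and $4\nmid n$''. I would argue separately according to the residue of $n$ modulo $4$, checking in each case that the anti-Korselt condition matches the characterization provided by Theorem~\ref{TI} (applied with the specific value $k=n-1$).

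First I would dispose of the even cases. If $n\equiv 2\pmod 4$, then $2\mid n$ but $2-1=1\mid n-1$ always, so $n$ is \emph{not} anti-Korselt; on the other hand $4\nmid n$ fails the second clause of the right-hand side, so both sides are false. If $n$ is a multiple of $4$, then again $2\mid n$ and $1\mid n-1$, so $n$ is not anti-Korselt; meanwhile $4\mid n$ means the ``$4\nmid n$'' clause fails, so the right-hand side is again false. (Note that here $k=n-1$ is odd, so Theorem~\ref{TI}(ii) would give $S_{n-1}(n)\equiv 0$ when $n-1>1$, i.e. $S_{n-1}(n)\equiv 0\pmod n$ does hold for $4\mid n$ with $n>2$ — but the extra requirement $4\nmid n$ is precisely what blocks these $n$ from the set, and it also correctly blocks them from being anti-Korselt since $2\mid n$.)

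Then I would treat the odd case, which is where the equivalence becomes informative. For $n$ odd, the right-hand side ``$S_{n-1}(n)\equiv 0\pmod n$ and $4\nmid n$'' reduces to simply ``$S_{n-1}(n)\equiv 0\pmod n$'' since $4\nmid n$ is automatic. By Theorem~\ref{TI}(i) with $k=n-1$, this holds if and only if $p-1\nmid n-1$ for every prime divisor $p$ of $n$ — which is exactly the definition of $n$ being anti-Korselt. This closes the argument.

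The only subtlety — and it is minor — is bookkeeping around small values and the role of the hypothesis $k\ne 0$, equivalently $n\ne 1$, in Theorem~\ref{TI}: one should note that $n=1$ is vacuously anti-Korselt (no prime divisors) and vacuously satisfies $S_0(1)=1\equiv 0\pmod 1$, so the statement holds there too, and likewise $n=2$ is handled by the $n\equiv 2\pmod 4$ case above. No genuine obstacle is expected; the proof is essentially a case-by-case translation of Theorem~\ref{TI} into the anti-Korselt language, with the clause $4\nmid n$ serving exactly to excise the multiples of $4$ that Theorem~\ref{TI}(ii) would otherwise admit.
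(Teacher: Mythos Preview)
Your approach is the same as the paper's---a direct appeal to Theorem~\ref{TI} together with the observation that anti-Korselt numbers are necessarily odd---but there is a slip in your $n\equiv 2\pmod 4$ case. You write that ``$4\nmid n$ fails the second clause'', but when $n\equiv 2\pmod 4$ the condition $4\nmid n$ actually \emph{holds}; the clause that fails is the first one, $S_{n-1}(n)\equiv 0\pmod n$. Indeed, Theorem~\ref{TI} shows that $S_k(n)\equiv 0\pmod n$ is impossible whenever $n\equiv 2\pmod 4$, since neither alternative (i) ($n$ odd) nor alternative (ii) ($4\mid n$) can apply. With that correction your case analysis goes through and the proof is complete.
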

\begin{proof}
Just apply Theorem \ref{TI} and observe that, by definition, anti-Korselt numbers are odd.
\end{proof}

\begin{prop}
The set of anti-Korselt numbers has asymptotic density whose value is:
$$ 2\delta(\mathfrak{P})-\frac{1}{2} \in [0.25801, 0.259652].$$
\end{prop}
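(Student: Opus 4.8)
The statement to prove is that the set $\mathcal{K}$ of anti-Korselt numbers has asymptotic density $2\delta(\mathfrak{P})-\frac12 \in [0.25801, 0.259652]$. The natural route is to piggyback on the previous two propositions, since an anti-Korselt number is exactly an odd $n$ with $n \notin \mathbb{N}\setminus\mathcal{M}_{f_{1,-1}}$ (taking $b=-1$, so that $f_{1,-1}(n)=n-1$), by Lemma (the anti-Korselt characterization). So the first step is to write $\mathcal{K} = \mathcal{M}_{f_{1,-1}} \cap \mathbb{I}$, where $\mathbb{I}$ is the set of odd positive integers. Since $\mathbb{N}\setminus\mathcal{M}_{f_{1,-1}} = \bigcup_{p\geq 3}\mathcal{G}_p^{-1} \cup \{n\equiv 2 \ (\mathrm{mod}\ 4)\}$ and every $\mathcal{G}_p^{-1}$ consists of odd numbers (indeed $-bp \equiv p \pmod{p(p-1)}$ is odd since $p(p-1)$ is even), the odd non-anti-Korselt numbers are precisely $\bigcup_{p\geq 3}\mathcal{G}_p^{-1}$. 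Hence
$$\delta(\mathcal{K}) = \delta(\mathbb{I}) - \delta\Bigl(\bigcup_{p\geq 3}\mathcal{G}_p^{-1}\Bigr) = \frac12 - \delta\Bigl(\bigcup_{p\geq 3}\mathcal{G}_p^{-1}\Bigr),$$
provided the density on the right exists, which follows from Lemma 2 exactly as in Proposition (existence of $\delta(\mathcal{M}_{f_{a,b}})$).

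**Main computation.** The second step is to relate $\delta\bigl(\bigcup_{p\geq 3}\mathcal{G}_p^{-1}\bigr)$ to $\delta(\mathfrak{P})$. This is precisely the content worked out in the proof of Proposition (the $b=\pm1$ case): there we established, via inclusion–exclusion combined with the identity $2\delta\bigl(\bigcap_{p\in\mathcal{P}}\mathcal{F}_p\bigr) = \delta\bigl(\bigcap_{p\in\mathcal{P}}\mathcal{G}_p^{b}\bigr)$ valid for all finite sets of odd primes $\mathcal{P}$, that
$$\delta\Bigl(\bigcup_{p\geq 3}\mathcal{G}_p^{-1}\Bigr) = 2\,\delta\Bigl(\bigcup_{p\geq 3}\mathcal{F}_p\Bigr) = 2\,\delta(\mathbb{I}\setminus\mathfrak{P}) = 2\Bigl(\frac12 - \delta(\mathfrak{P})\Bigr) = 1 - 2\delta(\mathfrak{P}).$$
Substituting into the expression for $\delta(\mathcal{K})$ gives $\delta(\mathcal{K}) = \frac12 - (1-2\delta(\mathfrak{P})) = 2\delta(\mathfrak{P}) - \frac12$, which is the claimed closed form. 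The numerical interval then follows immediately by plugging in $\delta(\mathfrak{P})\in[0.379005, 0.379826]$ from \cite{nos2}: $2\cdot 0.379005 - 0.5 = 0.25801$ and $2\cdot 0.379826 - 0.5 = 0.259652$.

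**Alternative phrasing and the main obstacle.** An even shorter route, if one prefers, is to observe that $\mathcal{K} = \mathcal{M}_{f_{1,-1}}\cap\mathbb{I}$ and that $\mathcal{M}_{f_{1,-1}}$ already excludes all $n\equiv 2\pmod 4$, while it contains no multiple of $4$ that is anti-Korselt either — so in fact $\mathcal{M}_{f_{1,-1}} = \mathcal{K} \ \dot\cup\ \{n\equiv 0\pmod 4: n\notin \mathcal{M}_{f_{1,-1}}\}$; more directly, $\mathcal{M}_{f_{1,-1}}$ as characterized in Theorem \ref{TI} with $k=n-1$ consists of anti-Korselt numbers together with multiples of $4$ (since for $n$ a multiple of $4$, $n-1>1$ is odd so $n\in\mathcal{M}$), whence $\delta(\mathcal{M}_{f_{1,-1}}) = \delta(\mathcal{K}) + \delta(4\mathbb{N}) = \delta(\mathcal{K}) + \frac14$, giving $\delta(\mathcal{K}) = (2\delta(\mathfrak{P})-\frac14) - \frac14 = 2\delta(\mathfrak{P})-\frac12$ from the previous proposition with no further work. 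I would present this second version since it invokes Proposition (the $b=\pm1$ density) as a black box. There is essentially no serious obstacle here: the only point requiring a line of care is checking that the multiples of $4$ contribute density exactly $\frac14$ and are disjoint from $\mathcal{K}$ (immediate, as $\mathcal{K}\subseteq\mathbb{I}$) and that the density of $\mathcal{K}$ exists at all, which is inherited from the existence of $\delta(\mathcal{M}_{f_{1,-1}})$ and $\delta(4\mathbb{N})$.
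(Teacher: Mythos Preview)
Your proposal is correct, and your ``alternative phrasing'' is exactly the paper's proof: the paper observes $\mathfrak{K}=\mathcal{M}_{f_{1,-1}}\setminus 4\mathbb{N}$ with $4\mathbb{N}\subset\mathcal{M}_{f_{1,-1}}$, then subtracts $\tfrac14$ from the density computed in the previous proposition. (Note a small slip in your write-up: the displayed decomposition $\mathcal{M}_{f_{1,-1}} = \mathcal{K}\,\dot\cup\,\{n\equiv 0\pmod 4: n\notin\mathcal{M}_{f_{1,-1}}\}$ has the wrong condition---it should read $n\in\mathcal{M}_{f_{1,-1}}$, or simply $4\mathbb{N}$---but you immediately correct this in the following clause.)
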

\begin{proof}
By the previous lemma the set of anti-Korselt numbers is  $\mathfrak{K}:=\mathcal{M}_{f_{1,-1}} \setminus 4 \mathbb{N}$. Since, $4\mathbb{N} \subset\mathcal{M}_{f_{1,-1}}$, it follows that
$$\delta (\mathfrak{K})=\delta(\mathcal{M}_{f_{_{1,-1}}})-\frac{1}{4}=2 \delta(\mathfrak{P})-\frac{1}{2}$$
as claimed.
\end{proof}

\section{$\mathcal{M}_f$ containing the prime numbers}
In this section we will characterize the set $\mathcal{M}_f$ for some functions $f$ such that $f(p)=\frac{p-1}{2}$ for every odd prime. Note that in this case $\mathcal{M}_f$ contains all odd primes. In particular, we will focus on $f=\frac{\varphi}{2}$ and $f=\frac{\lambda}{2}$, where $\varphi$ and $\lambda$ denote Euler and Carmichael function, respectively.

\begin{prop}
$\mathcal{M}_{\frac{\varphi}{2}}=\{p^k: p\ \textrm{odd prime}\}$.
\end{prop}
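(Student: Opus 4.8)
The plan is to use Theorem \ref{TI} to reduce the claim $n \in \mathcal{M}_{\frac{\varphi}{2}}$ to a divisibility condition on the prime divisors of $n$, and then to show this condition forces $n$ to be a prime power. First I would note that $\varphi(n)$ is even for $n > 2$, and $\varphi(1)=\varphi(2)=1$; a direct check disposes of $n=1,2$ and of $4 \mid n$ (in the latter case $k = \varphi(n)/2$ would have to be odd and $>1$ by Theorem \ref{TI}ii, but $\varphi(n)/2$ is even as soon as $4 \mid n$ since then $4 \mid \varphi(n)$ — wait, more carefully: if $8 \mid n$ or if two odd primes divide $n$ then $4\mid\varphi(n)$; the remaining small cases $n=4, n=12, \dots$ are handled individually). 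So the interesting range is $n$ odd, $n>1$, where Theorem \ref{TI}i applies: $n \in \mathcal{M}_{\frac{\varphi}{2}}$ iff for every prime $p \mid n$ we have $p-1 \nmid \frac{\varphi(n)}{2}$.

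The heart of the argument is the following dichotomy. Write $n = p_1^{r_1}\cdots p_s^{r_s}$ with the $p_i$ odd. If $s = 1$, say $n = p^r$, then $\varphi(n) = p^{r-1}(p-1)$, so $\frac{\varphi(n)}{2} = p^{r-1}\frac{p-1}{2}$, which is \emph{not} divisible by $p-1$ (since $p^{r-1}$ is odd, the $2$-adic valuation of $\frac{\varphi(n)}{2}$ is one less than that of $p-1$); hence $p^r \in \mathcal{M}_{\frac{\varphi}{2}}$, giving the inclusion $\supseteq$. Conversely, suppose $s \geq 2$. Then I would show there is some index $i$ with $p_i - 1 \mid \frac{\varphi(n)}{2}$, which by Theorem \ref{TI}i means $n \notin \mathcal{M}_{\frac{\varphi}{2}}$. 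The point is that $\varphi(n) = \prod_j p_j^{r_j-1}(p_j-1)$ accumulates factors of $2$ from \emph{every} $p_j - 1$: choose $i$ so that $p_i - 1$ has the smallest $2$-adic valuation among the $p_j-1$; then $\varphi(n)/(p_i-1) = p_i^{r_i-1}\prod_{j\neq i}p_j^{r_j-1}(p_j-1)$ is even (because $s\geq 2$ supplies at least one further even factor $p_j-1$), so $(p_i-1) \mid \frac{\varphi(n)}{2}$, as needed.

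The main obstacle is bookkeeping with the $2$-adic valuations and the even-$n$ boundary cases rather than anything deep: one must be careful that in the case $s\geq 2$ the extra factor of $2$ really is available (it is, since each $p_j-1$ with $j \neq i$ contributes at least one), and one must correctly enumerate which even $n$ slip through — these are exactly $n \equiv 2 \pmod 4$ with $n/2$ a prime power (excluded by Theorem \ref{TI}i via the prime $2$? no — here $n$ even so neither branch of Theorem \ref{TI} with $n$ odd applies; $n\equiv 2 \pmod 4$ never lies in $\mathcal{M}$ by the structure of Theorem \ref{TI}) together with $n=1,2,4$-type cases, all of which fail to be odd prime powers and so are correctly excluded. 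Assembling: $\mathcal{M}_{\frac{\varphi}{2}}$ consists precisely of the odd prime powers $p^k$, which is the claim.
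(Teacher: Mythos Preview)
Your argument is correct and follows essentially the same route as the paper's: Theorem~\ref{TI} reduces the question to the divisibility condition $p-1 \nmid \varphi(n)/2$, prime powers satisfy it by a $2$-adic count, and for $s\geq 2$ some $p_i-1$ divides $\varphi(n)/2$. Two minor simplifications: when $s\geq 2$ you need not pick the $p_i$ of minimal $2$-adic valuation, since for \emph{any} $i$ the quotient $\varphi(n)/(p_i-1)$ already contains an even factor $p_j-1$ with $j\neq i$; and the even case is cleaner than your case-split suggests, since $4\mid n$ forces $\varphi(n)/2$ to be even (or equal to $1$ when $n=4$), immediately contradicting Theorem~\ref{TI}(ii).
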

\begin{proof}
If $p$ is an odd prime and $k\in\mathbb{N}$, $\frac{\varphi(p^k)}{2}=\frac{p^{k-1}(p-1)}{2}$ and $\gcd\left(\frac{p^{k-1}(p-1)}{2},p-1\right)<p-1$. Consequently we can apply Proposition 1 to get that $p^k\in\mathcal{M}_{\frac{\varphi}{2}}$.

Now, if $n$ is odd and there exists $p,q$ distinct odd primes dividing $n$ it readily follows that $p-1$ divides $\frac{\varphi(n)}{2}$ so Proposition 3 applies and it follows that $n\not\in\mathcal{M}_{\frac{\varphi}{2}}$. Thus, if an odd $n\in\mathcal{M}_{\frac{\varphi}{2}}$ it must be $n=p^k$.

Finally, if $n\in\mathcal{M}_{\frac{\varphi}{2}}$ is even Proposition 4 implies that 4 divides $n$ and Proposition 6 implies that $\frac{\varphi(n)}{2}$ is odd. Since these statements are contradictory the result follows.
\end{proof}

In what follows we will use the notation $\mathcal{E}(m):=\max\{k \in \mathbb{N}: 2^k\ \textrm{divides }  m\}$.

\begin{prop}
Let $n=2^mp_{1}^{r_1} \cdots p_{s}^{r_s}$ with $s>0$. Then $n\in \mathcal{M}_{\frac{\lambda}{2}}$ if and only if one of these conditions holds:
\begin{itemize}
\item[i)] $m=0$ and $\mathcal{E}(p_i-1)=\mathcal{E}(p_j-1)$ for every $i,j$.
\item[ii)] $m=2$ or $3$, $\mathcal{E}(p_i-1)=1$ for every $i$ and $\frac{n}{2^m}\neq 3$.
\end{itemize}
\end{prop}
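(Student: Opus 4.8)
The plan is to split according to the parity of $n$ and, in the even case, the power of $2$ dividing $n$, invoking Theorem \ref{TI} and Propositions 1, 3, 4, 5 and 6 exactly as in the proof of Proposition for $\mathcal{M}_{\varphi/2}$. The central arithmetic fact we need throughout is the following elementary observation about $\lambda$: if $n = 2^m p_1^{r_1}\cdots p_s^{r_s}$ then $\lambda(n) = \operatorname{lcm}\bigl(\lambda(2^m),\, p_1^{r_1-1}(p_1-1),\,\ldots,\, p_s^{r_s-1}(p_s-1)\bigr)$, so that for an odd prime $q \mid n$ one has $q-1 \mid \tfrac{\lambda(n)}{2}$ precisely when the $2$-adic valuation $\mathcal{E}(q-1)$ is \emph{strictly} smaller than the maximum of $\mathcal{E}(2^m)$ (interpreted as $0$ if $m=0$, and noting $\lambda(2)=1$, $\lambda(4)=2$, $\lambda(2^m)=2^{m-2}$ for $m\ge 3$) and the $\mathcal{E}(p_i-1)$ over all $i$; the odd part of $q-1$ always divides $\tfrac{\lambda(n)}{2}$. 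So the condition "$q-1 \nmid \tfrac{\lambda(n)}{2}$ for every odd prime $q\mid n$" from Theorem \ref{TI}(i) translates into a statement purely about the $2$-adic valuations $\mathcal{E}(p_i-1)$ and $\mathcal{E}(\lambda(2^m))$.

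Concretely I would argue as follows. \emph{Case $m=0$ ($n$ odd).} By Proposition 1 (or Theorem \ref{TI}(i)), $n\in\mathcal{M}_{\lambda/2}$ iff $\gcd\bigl(\tfrac{\lambda(n)}{2},p_i-1\bigr) < p_i-1$ for all $i$, i.e. iff $p_i - 1 \nmid \tfrac{\lambda(n)}{2}$ for all $i$. Using the valuation description above, $p_i-1 \nmid \tfrac{\lambda(n)}{2}$ holds iff $\mathcal{E}(p_i-1) \ge \max_j \mathcal{E}(p_j-1)$, i.e. iff $\mathcal{E}(p_i-1) = \max_j \mathcal{E}(p_j-1)$; requiring this for every $i$ forces all $\mathcal{E}(p_i-1)$ equal, which is condition (i). \emph{Case $m=1$ ($n\equiv 2 \bmod 4$).} By Proposition 4(i), $S_{\lambda(n)/2}(n)\not\equiv 0 \pmod n$ always, so no such $n$ lies in $\mathcal{M}_{\lambda/2}$, consistent with the statement (neither (i) nor (ii) allows $m=1$). \emph{Case $m\ge 2$ ($4\mid n$).} By Theorem \ref{TI}(ii), $n\in\mathcal{M}_{\lambda/2}$ iff $\tfrac{\lambda(n)}{2}$ is odd and $\tfrac{\lambda(n)}{2}>1$. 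Now $\lambda(2^m)=2^{m-2}$ for $m\ge 3$ and $\lambda(4)=2$, and $\lambda(n)=\operatorname{lcm}(\lambda(2^m),\,p_i^{r_i-1}(p_i-1))$; for $\tfrac{\lambda(n)}{2}$ to be odd we need $\mathcal{E}(\lambda(n))=1$, which by $\mathcal{E}(\lambda(4))=1$, $\mathcal{E}(\lambda(2^m))=m-2$ for $m\ge 3$, forces $m\in\{2,3\}$ and $\mathcal{E}(p_i-1)\le 1$ for all $i$; combined with $s>0$ (so some $p_i-1$ is even) this gives $\mathcal{E}(p_i-1)=1$ for all $i$. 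Finally $\tfrac{\lambda(n)}{2}>1$ is needed to exclude $S_1$; here $\tfrac{\lambda(n)}{2}=1$ happens exactly when $\lambda(n)=2$, i.e. when the odd part of every $p_i-1$ and of $p_i^{r_i-1}$ is trivial, which under $\mathcal{E}(p_i-1)=1$ means each $p_i=3$ with $r_i=1$; since $s>0$ and distinct primes, that is $\tfrac{n}{2^m}=3$, which is exactly the case excluded in (ii).

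The main obstacle, and the only place requiring genuine care rather than bookkeeping, is the boundary behaviour of $\lambda$ at the prime $2$: one must keep straight that $\lambda(2)=1$, $\lambda(4)=2$, $\lambda(8)=2$, and $\lambda(2^m)=2^{m-2}$ for $m\ge 3$, which is why $m=2$ and $m=3$ behave the same ($\mathcal{E}(\lambda(2^m))=1$ in both) while $m\ge 4$ fails and $m=1$ fails for a different reason (via Proposition 4). A secondary subtlety is the exact treatment of the edge case $\tfrac{\lambda(n)}{2}=1$, i.e. the exclusion $\tfrac{n}{2^m}\neq 3$: when $n=2^m\cdot 3$ with $m\in\{2,3\}$ we get $\lambda(n)=2$, so $\tfrac{\lambda(n)}{2}=1$ and the relevant power sum is $S_1(n)=\tfrac{n(n+1)}{2}$, which is not divisible by $n$ since $n$ is even; this must be singled out because Theorem \ref{TI}(ii) requires $k>1$. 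Everything else is a direct translation of Theorem \ref{TI} and Propositions 1 and 4 through the lcm-formula for $\lambda$.
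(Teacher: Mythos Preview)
Your proof is correct and follows essentially the same route as the paper: split according to the power of $2$ in $n$, use Proposition~4 to rule out $m=1$, and in the remaining cases translate the divisibility conditions of Theorem~\ref{TI} (equivalently Propositions~3, 5, 6) into statements about the $2$-adic valuations $\mathcal{E}(p_i-1)$ via the lcm formula for $\lambda(n)$, isolating the edge case $\lambda(n)=2$ (i.e.\ $n/2^m=3$) where $k=\tfrac{\lambda(n)}{2}=1$. The only cosmetic issue is the notation ``$\mathcal{E}(2^m)$'' in your first paragraph, where you clearly intend $\mathcal{E}(\lambda(2^m))$; the detailed argument in your second paragraph gets this right.
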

\begin{proof}
If condition i) holds, $n=p_1^{r_1}\cdots p_s^{r_s}$ and $p_i=2^tq_i+1$ with $q_i$ even and $t$ not depending on $i$. In this case $\lambda(n)=\textrm{lcm}(\varphi(p_1^{r_1}),\dots,\varphi(p_s^{r_s}))=2^t\textrm{lcm}(p_1^{r_1-1}q_1,\dots,p_s^{r_s-1}q_s)=2^tL$ with $L$ odd. Consequently $\frac{\lambda(n)}{2}=2^{t-1}L$ and since $L$ is odd it follows that $p_i-1$ does not divide $\frac{\lambda(n)}{2}$ and Proposition 3  implies that $n\in\mathcal{M}_{\frac{\lambda}{2}}$.

If condition ii) holds, it follows that $\lambda(n)=2L$ with $L>1$ odd. Consequently $\frac{\lambda(n)}{2}=L>1$ is odd and Proposition 5 applies to conclude that $n\in\mathcal{M}_{\frac{\lambda}{2}}$.

Finally, assume that $n=2^mp_{1}^{r_1} \cdots p_{s}^{r_s}$ with $s>0$ and $p_i=2^{m_i}q_i+1$ with $q_i$ odd is such that $n\in\mathcal{M}_{\frac{\lambda}{2}}$. First of all, Proposition 4 implies that $m=0$ or $m>1$.

If $m>1$, Proposition 5 (i) implies that $\frac{n}{2^m}\neq 3$ and Proposition 6 implies that $\frac{\lambda(n)}{2}$ is odd so $m=2$ or $3$ and $p_i^{r_i-1}(p_i-1)=\varphi(p_i^{r_i})=2L_i$ with $L_i$ odd; i.e., $p_i-1=2q_i$ with $q_i$ odd as claimed.

If, on the other hand, $m=0$, Proposition 3 implies that $p_i-1$ does not divide $\frac{\lambda(n)}{2}$ for any $i$. But if $m_i>m_j$ for some $i\neq j$ we have that $2^{m_i-1}q_j$ divides $\frac{\lambda(n)}{2}$ and, consequently, $p_j-1$ divides $\frac{\lambda(n)}{2}$. A contradiction.
\end{proof}

Before we proceed we will introduce some notation and technical results. Given a prime $p$ and a subset $A\subseteq\mathbb{N}$, we define the set:
$$A_{p}:=\{n\in A: p\mid n\ \textrm{but}\ p^{2}\nmid n\}.$$

With this notation we have the following result.

\begin{lem}
If for a set of primes $\{p_{i}\}_{i\in I}$ we have $\delta(A_{p_{i}})=0$ for every $i\in I$, and $\sum\limits_{i\in I}p_{i}^{-1}=\infty,$ then
$\delta(A)=0$.
\end{lem}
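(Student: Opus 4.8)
The plan is to estimate the upper density $\overline\delta(A)$ from above by quantities that can be made arbitrarily small, using only \emph{finitely} many of the primes $p_i$ at a time (so that no subtlety about densities of infinite unions arises at the decomposition step). Fix a finite subset $J\subseteq I$. For $n\in A$ and $i\in J$ there are exactly three mutually exclusive possibilities: $p_i\mid n$ and $p_i^2\nmid n$ (so $n\in A_{p_i}$); or $p_i\nmid n$; or $p_i^2\mid n$. Hence
$$A\subseteq \Big(\bigcup_{i\in J}A_{p_i}\Big)\cup C_J,\qquad C_J:=\{n\in\mathbb{N}: \text{for every } i\in J,\ p_i\nmid n \text{ or } p_i^2\mid n\}.$$

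Next I would compute $\delta(C_J)$. The set $C_J$ is periodic modulo $\prod_{i\in J}p_i^2$; modulo $p_i^2$ the admissible residues are the $p_i^2-p_i$ residues not divisible by $p_i$ together with the single residue $0$, giving $p_i^2-p_i+1$ admissible residues. By the Chinese Remainder Theorem these conditions are independent across the distinct primes of $J$, so
$$\delta(C_J)=\prod_{i\in J}\frac{p_i^2-p_i+1}{p_i^2}=\prod_{i\in J}\Big(1-\frac{p_i-1}{p_i^2}\Big).$$
Since $\delta(A_{p_i})=0$ for every $i$, subadditivity of upper density gives
$$\overline\delta(A)\le \sum_{i\in J}\overline\delta(A_{p_i})+\delta(C_J)=\prod_{i\in J}\Big(1-\frac{p_i-1}{p_i^2}\Big).$$

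Finally I would let $J$ grow. Because $p_i\ge 2$ we have $\frac{p_i-1}{p_i^2}=\frac1{p_i}-\frac1{p_i^2}\ge \frac1{2p_i}$, so the hypothesis $\sum_{i\in I}p_i^{-1}=\infty$ forces $\sum_{i\in I}\frac{p_i-1}{p_i^2}=\infty$; by the standard criterion for infinite products, $\prod_{i\in I}\big(1-\frac{p_i-1}{p_i^2}\big)=0$. Thus, given $\varepsilon>0$, choose a finite $J\subseteq I$ with $\prod_{i\in J}\big(1-\frac{p_i-1}{p_i^2}\big)<\varepsilon$; the displayed inequality then yields $\overline\delta(A)<\varepsilon$. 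As $\varepsilon$ was arbitrary, $\overline\delta(A)=0$, hence $\delta(A)=0$.

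The argument is essentially routine; the only point requiring care is keeping the set $J$ finite in the decomposition $A\subseteq(\bigcup_{i\in J}A_{p_i})\cup C_J$ so that the density of $C_J$ is a genuine finite product computable by CRT, and then separately invoking the divergence of $\sum p_i^{-1}$ to drive that product to zero. No single step is a serious obstacle.
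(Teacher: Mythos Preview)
Your argument is correct. The decomposition $A\subseteq\big(\bigcup_{i\in J}A_{p_i}\big)\cup C_J$ is valid, the density computation for the periodic set $C_J$ via CRT is right, and the passage from $\sum_{i\in I}p_i^{-1}=\infty$ to $\prod_{i\in I}\big(1-\tfrac{p_i-1}{p_i^2}\big)=0$ is justified exactly as you wrote.

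As for comparison: the paper does \emph{not} supply its own proof of this lemma. It is stated as a technical result and used immediately afterwards to show $\delta(\mathcal{M}_{\lambda/2})=0$; the only nearby citation is to Niven's note on sets of integers of density zero, and this lemma is indeed a standard Niven-type criterion. So there is no paper proof to compare against, and your self-contained argument fills that gap cleanly.
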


Now, given a positive integer $k$ we define the set:
$$\Upsilon_k:=  \{n\textrm{ odd }: \mathcal{E}(p-1)=k\ \textrm{for every $p\mid n$}\}.$$
With this notation, Proposition 11 states that:
$$\mathcal{M}_{\frac{\lambda}{2}}=\left(\bigcup_{k=1}^\infty \Upsilon_k \cup 4 \Upsilon_1 \cup 8 \Upsilon_1\right) \setminus\{12,24\}.$$

We are in the condition to compute the asymptotic density of $\mathcal{M}_{\frac{\lambda}{2}}$.

\begin{prop}
$\mathcal{M}_{\frac{\lambda}{2}}$ has zero asymptotic density.
\end{prop}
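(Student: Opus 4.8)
The plan is to feed the structural identity $\mathcal{M}_{\frac{\lambda}{2}}=\big(\bigcup_{k\ge 1}\Upsilon_k\cup 4\Upsilon_1\cup 8\Upsilon_1\big)\setminus\{12,24\}$ into the lemma asserting that $\delta(A)=0$ whenever $\delta(A_{p_i})=0$ for a set of primes $\{p_i\}_{i\in I}$ with $\sum_{i\in I}1/p_i=\infty$, and to apply that lemma twice. Since discarding finitely many integers does not change density, and since $\delta(c\Upsilon_1)=0$ as soon as $\delta(\Upsilon_1)=0$ (the count of elements of $c\Upsilon_1$ up to $x$ is the count of elements of $\Upsilon_1$ up to $x/c$), it is enough to prove that $\bigcup_{k\ge 1}\Upsilon_k$ has zero asymptotic density. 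Throughout, write $S_k:=\{p\ \textrm{prime}:\mathcal{E}(p-1)=k\}$; since $\mathcal{E}(p-1)=k$ is the single congruence $p\equiv 1+2^k\ (\textrm{mod}\ 2^{k+1})$, the set $\Upsilon_k$ consists of odd integers all of whose prime factors lie in $S_k$.

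First I would show $\delta(\Upsilon_k)=0$ for every fixed $k\ge 1$. Because $\varphi(2^{k+1})=2^k\ge 2$, there is a residue $a$ with $\gcd(a,2^{k+1})=1$ and $a\not\equiv 1+2^k\ (\textrm{mod}\ 2^{k+1})$; let $I_k$ be the set of primes congruent to $a$ modulo $2^{k+1}$, so that $\sum_{p\in I_k}1/p=\infty$ by Dirichlet's theorem on primes in arithmetic progressions. For $p\in I_k$ we have $p\notin S_k$, hence no element of $\Upsilon_k$ is divisible by $p$; thus $(\Upsilon_k)_p=\emptyset$ and trivially $\delta((\Upsilon_k)_p)=0$. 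The quoted lemma now gives $\delta(\Upsilon_k)=0$.

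Next I would treat the union. Put $A:=\bigcup_{k\ge 1}\Upsilon_k$ and let $I$ be the set of all odd primes, so $\sum_{p\in I}1/p=\infty$. The point is that $A_p=(\Upsilon_{\mathcal{E}(p-1)})_p$ for every odd prime $p$: if $n\in A$ and $p\mid n$, then $n\in\Upsilon_k$ for some $k$, and since every prime factor of $n$ satisfies $\mathcal{E}(\cdot-1)=k$, applying this to the factor $p$ forces $k=\mathcal{E}(p-1)$. Hence $A_p\subseteq\Upsilon_{\mathcal{E}(p-1)}$, which has zero density by the previous step, so $\delta(A_p)=0$ for all $p\in I$. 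Applying the lemma once more yields $\delta(A)=0$, whence also $\delta(4\Upsilon_1)=\delta(8\Upsilon_1)=0$; as $\mathcal{M}_{\frac{\lambda}{2}}$ is then contained in a finite union of zero-density sets, it has zero density.

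The main obstacle is conceptual rather than computational: a countable union of zero-density sets need not have zero density, so establishing $\delta(\Upsilon_k)=0$ for each $k$ is by itself not enough. The second application of the lemma is precisely what bridges this gap, and it succeeds only because divisibility of $n\in A$ by a prime $p$ pins down which $\Upsilon_k$ contains $n$, collapsing $A_p$ onto a set already shown to be negligible. The only ingredient external to the paper is the divergence of $\sum 1/p$ over a suitable arithmetic progression, used solely to verify the hypotheses of the quoted lemma.
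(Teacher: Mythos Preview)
Your proof is correct. Both your argument and the paper's reduce to showing $\delta(A)=0$ for $A=\bigcup_{k}\Upsilon_k$ via the quoted lemma, and both rest on the same key observation: for an odd prime $p$, the set $A_p$ is confined to the single stratum $\Upsilon_{\mathcal{E}(p-1)}$. The organization differs. You apply the lemma twice---first (together with Dirichlet on primes in progressions) to obtain $\delta(\Upsilon_k)=0$ for each fixed $k$, then to pass from $\delta(A_p)\le\delta(\Upsilon_{\mathcal{E}(p-1)})=0$ to $\delta(A)=0$. The paper applies the lemma only once, computing $\delta(A_p)=0$ directly by writing $\mathbb{N}\setminus A_p$ as the disjoint union of $\mathcal{T}_p=\{n:p\nmid n\}$ and $\bigcup_{q\in\mathcal{I}_p}p\mathcal{K}_q$ (where $\mathcal{I}_p=\{p\}\cup\{q\ \textrm{prime}:\mathcal{E}(q-1)\neq\mathcal{E}(p-1)\}$) and asserting that $\delta\big(\bigcup_{q\in\mathcal{I}_p}\mathcal{K}_q\big)=1$. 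That assertion is exactly $\delta(\Upsilon_{\mathcal{E}(p-1)})=0$ in disguise, so your first step in fact supplies the justification the paper leaves as ``clear.'' Your route is slightly longer but more self-contained; the paper's is terser but leans on an unargued density-one claim.
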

\begin{proof}
Since
$$\mathcal{M}_{\frac{\lambda}{2}}=\left(\bigcup_{k=1}^\infty \Upsilon_k \cup 4 \Upsilon_1 \cup 8 \Upsilon_1\right) \setminus\{12,24\},$$
it will be enough to show that $A=\bigcup_{n=1}^\infty \Upsilon_n$ has zero asymptotic density.

For any prime $p$ let us introduce the following sets:
$$
\mathcal{I}_{p}:=\{p\}\cup\{q\ \textrm{prime}: \mathcal{E}(p-1)\neq\mathcal{ E}(q-1)\},
$$
$$
\mathcal{T}_{p}:=\{n\in \mathbb{N}: p\nmid n\},
$$
$$\mathcal{K}_{p}:=\{pk: k\in \mathbb{N}\}.$$

It is easy to observe that, for any prime $p$:
$$\mathbb{N}\setminus A_{p}=\bigcup_{q\in \mathcal{I}_{p}}p\mathcal{K}_{q}\cup \mathcal{T}_{p}.$$

Now, considering that
$$\mathcal{I}_{p}=\{p\}\bigcup \{ q\ \textrm{prime}: q \not\equiv 2^{k}+1\ \textrm{(mod $2^{k+1}$) with}\ k=\mathcal{E}(p-1)\}$$
it is clear that $\displaystyle{\delta\left(\bigcup_{q\in \mathcal{I}_{p}} \mathcal{K}_{q}\right)=1}$. Thus, $\displaystyle{\delta\left(\bigcup_{q\in \mathcal{I}_{p}} p\mathcal{K}_{q}\right)=\dfrac{1}{p}}$ and since $\delta(\mathcal{T}_{p})=\dfrac{p-1}{p}$ it follows that, for any prime $p$:
$$\delta(A_p)=1- \delta(\mathbb{N}\setminus {A}_{p})=1-\delta\left(\bigcup_{q\in\mathcal{I}_{p}} p \mathcal{K}_{q}\right)-\delta(\mathcal{T}_{p})=0$$
and the result follows from the previous lemma.
\end{proof}

\end{document}